\documentclass[12pt]{article}
\usepackage[a4paper,margin=1in]{geometry}
\usepackage{amsmath,amssymb,amsthm,mathtools}
\usepackage{graphicx}
\usepackage{tikz}
\usepackage{booktabs}
\usepackage{array}
\usepackage{algorithm}
\usepackage{algpseudocode}
\usepackage{hyperref}
 \usepackage{caption}
\usepackage{authblk}
\everymath{\displaystyle}
\usepackage{enumitem}
\renewenvironment{proof}{{\bf \noindent Proof.}}{\qed}
\usepackage{tikz}
\usetikzlibrary{fit,backgrounds}
\usetikzlibrary{arrows.meta,positioning,calc,shapes.geometric}
\usetikzlibrary{fit, backgrounds, shadows}

\hypersetup{colorlinks=true,linkcolor=blue,citecolor=blue,urlcolor=blue}

\newtheorem{theorem}{Theorem}[section]
\newtheorem{lemma}[theorem]{Lemma}
\newtheorem{proposition}[theorem]{Proposition}
\newtheorem{corollary}[theorem]{Corollary}

\theoremstyle{definition}

\theoremstyle{remark}
\newtheorem{remark}[theorem]{Remark}

 \title{On embeddings of the difference graph of the intersection power graph and the power graph}

 \author{Manisha, Ekta, Jitender Kumar$^*$\\
	 \small Department of Mathematics, Birla Institute of Technology and Science Pilani, Pilani-333031, India 
  
  \texttt{yadavmanisha2611@gmail.com, ektasangwan0@gmail.com, jitenderarora09@gmail.com}
  }

\newcommand{\transtitle}[1]{}
\date{}

\begin{document}
 \maketitle


 \begin{abstract}
 The power graph of a finite group $G$ is a simple undirected graph with vertex set $G$ and two vertices are adjacent if one is a power of the other. The intersection power graph of a finite group $G$ is a simple undirected graph with vertex set $G$ and two vertices $x$, $y$ are adjacent if $\langle x\rangle \cap \langle y \rangle \neq \{e\}$. The difference graph $\mathcal{D}(G)$ of a finite group $G$ is the difference of the intersection power graph $\mathcal{G}_{1}(G)$  and power graph $\mathcal{P}(G)$ with all isolated vertices removed. We characterized all the finite nilpotent groups $G$ such that the difference graph is planar. Further, we determine all the finite nilpotent groups whose difference graph has genus at most $2$. Moreover, we prove that there does not exist any group whose difference graph is projective planar.

 \end{abstract}
\noindent\textbf{AMS 2020 Mathematics Subject Classification.} 05C25, 05C50

\medskip
\noindent\textbf{Keywords.} Intersection power graph, Power graph, Genus, Cross-cap

\section{Introduction and Preliminaries}
Topological indices are important numerical invariants that capture structural properties of algebraic graphs. Recently, algebraic graphs on groups, namely the power graph, enhanced power graph, and commuting graph, have been studied extensively due to their valuable applications. The use of algebraic techniques can give new constructions to topological indices. The \emph{power graph} $\mathcal{P}(G)$ of a group $G$ is the simple undirected graph with vertex set $G$, and two distinct vertices $x,y$ are adjacent if one is the power of another, or equivalently: either $x \in \langle y\rangle$ or $y \in \langle x \rangle $. The directed power graph was introduced by Kelarev and Quinn \cite{powerintroduced}. Later on, the undirected power graph was studied by various researchers from different perspectives, see \cite{powerisomorphism,embeddingpower,genus2power}. For more details on the power graph, one can refer to the survey paper \cite{a.kumar2021}. The \emph{intersection power graph} $\mathcal{G}_{1}(G)$ is the graph with vertex set $G$ and two non-identity elements $x,y$ are adjacent if there exists a non-identity element $z$ which is power of both $x$ and $y$, or equivalently: $\langle x\rangle \cap \langle y \rangle \neq \{e\}$ and the identity element is considered to be adjacent to all elements. Intersection power graph was studied by researchers in \cite{intersectionintroduced,fathima2020orientable} and references therein. In the survey \cite{graphdefined} Cameron introduced various difference graphs and open questions. The \emph{difference graph} $\mathcal{D}(G)$ of a finite group $G$ is the difference of intersection power graph and power graph with all the isolated vertices removed. Topological graph theory mainly focuses on the representation of graphs on surfaces so that no two edges cross each other. We say a graph is \emph{embeddable} on a topological surface if it can be drawn on a surface so that no two edges cross each other. A graph is said to be \emph{planar} if it can be drawn on a plane without edge crossing. The \emph{genus} $\gamma(\Gamma)$ of a graph $\Gamma$ is the minimum non-negative integer $g$ such that the graph can be embedded in a sphere with $g$ handles. The problem of calculating the genus is NP-hard (cf. \cite{a.Thomassen1989}). The \emph{cross-cap} $\overline \gamma(\Gamma)$ of a graph $\Gamma $ is the minimum non-negative integer $k$ such that the graph $\Gamma$ can be embedded on a non-orientable surface with $k$-cross-caps. Its application lies in electronic circuit design, where the main purpose is to embed a circuit on a circuit board without any two connections crossing each other. The genus and cross-cap of the intersection power graph, the power graph, and the difference graph have been studied in \cite{fathima2020orientable, genus2power,  parveen2025nilpotent}.\\
We recall essential definitions and results. 
Let $G$ be a group with the identity element $e$. The order $o(x)$ of an element $x\in G$ is the smallest positive integer $n$ satisfying  $x^n=e$. We write $\pi _G=\{o(g): g \in G\}$. The least common multiple of the orders of all the elements of $G$  is called the \emph{exponent} of $G$, and it is denoted by $\mathrm{exp}(G)$.  For $d\in \pi _G$, by $s_d(G)$ we mean the number of cyclic subgroups of order $d$ in $G$. A finite group $G$ is called a \emph{p-group} if $|G|=p^{\alpha}$ for some prime $p$ and integer $\alpha$.  By $\langle x, y\rangle$, we mean the subgroup of $G$ generated by $x$ and $y$.
%
The following results are useful in the sequel.
\begin{theorem}[{\rm \cite[p. 193]{b.dummit1991abstract}}]{\label{nilpotent}} Let $G$ be a finite group. Then the following statements are equivalent:
\begin{enumerate}
    \item[(i)] G is a nilpotent group.
    \item[(ii)] Every Sylow subgroup of $G$ is normal.
    \item[(iii)] $G$ is direct product of its Sylow subgroups.
    \item[(iv)] For $x,y \in G$, $x$ and $y$ commute whenever $o(x)$ and $o(y)$ are relatively prime.
\end{enumerate}
\end{theorem}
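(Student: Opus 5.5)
The natural route is the cycle (i)$\Rightarrow$(ii)$\Rightarrow$(iii)$\Rightarrow$(iv)$\Rightarrow$(i), using Sylow's theorems throughout. Here $G$ is nilpotent when its upper central series $Z_0=1\le Z_1=Z(G)\le Z_2\le\cdots$ reaches $G$, and $|G|=p_1^{a_1}\cdots p_k^{a_k}$ with Sylow $p_i$-subgroups $P_i$.

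\textbf{(i)$\Rightarrow$(ii).} First establish the normalizer-growth lemma: if $H<G$ is proper and $G$ is nilpotent, then $H<N_G(H)$. Take the largest $j$ with $Z_j\le H$. Then some element of $Z_{j+1}$ lies outside $H$. For $z\in Z_{j+1}$ and $h\in H$ we have $[z,h]\in Z_j\le H$, so $z$ normalizes $H$. Now let $P$ be a Sylow $p$-subgroup and put $N=N_G(P)$. The Frattini argument shows $N_G(N)=N$: if $g$ normalizes $N$, then $P^g$ is a Sylow subgroup of $N$, hence conjugate to $P$ inside $N$. The growth lemma then forces $N=G$, so $P\trianglelefteq G$.

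\textbf{(ii)$\Rightarrow$(iii).} For $i\ne j$, $P_i$ and $P_j$ are normal with $P_i\cap P_j=1$ by coprimality. So $[P_i,P_j]\le P_i\cap P_j=1$ and they commute elementwise. Inductively $P_1\cdots P_m$ is a normal subgroup of order $\prod_{i\le m}|P_i|$. Its intersection with $P_{m+1}$ is trivial by orders, so the product is internal direct and fills $G$ by counting.

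\textbf{(iii)$\Rightarrow$(iv).} Write $x=(x_1,\dots,x_k)$ with $x_i\in P_i$. Then $o(x)=\prod o(x_i)$, so $x_i\ne 1$ only for primes dividing $o(x)$. If $\gcd(o(x),o(y))=1$, then $x$ and $y$ are supported on disjoint sets of coordinates and therefore commute.

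\textbf{(iv)$\Rightarrow$(i).} Each Sylow $P_i$ is centralized by every $p_j$-element with $j\ne i$. Hence the subgroup $\langle P_j: j\ne i\rangle$, which contains all $p_j$-Sylows for $j\neq i$, normalizes $P_i$. The conjugates of $P_i$ are all contained in $N_G(P_i)$, which also contains $P_i$, and every element of $G$ is a product of elements of prime-power order. Since $G$ is generated by the Sylow subgroups of all primes, $N_G(P_i)=G$. So (ii) holds, and thus (iii). Finally, a direct product of $p$-groups is nilpotent: $p$-groups are nilpotent because a nontrivial $p$-group has nontrivial center by the class equation, and $Z_j(A\times B)=Z_j(A)\times Z_j(B)$.

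\textbf{Main obstacle.} The step I expect to be hardest is (i)$\Rightarrow$(ii), namely the normalizer-growth lemma combined with the Frattini argument. The other implications are bookkeeping with orders.

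Since the paper itself cites this result from Dummit--Foote as a standard fact, in the paper I would simply invoke it rather than reprove it.
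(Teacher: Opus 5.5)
\textbf{Overall.} Your implications (i)$\Rightarrow$(ii)$\Rightarrow$(iii) are correct and essentially reproduce the standard Dummit--Foote argument that the paper cites without proof: normalizers grow, and $N_G(N_G(P))=N_G(P)$. Your (iii)$\Rightarrow$(iv), and your closing step that a direct product of $p$-groups is nilpotent, are also fine.

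\textbf{The gap} is in (iv)$\Rightarrow$(ii), at the sentence ``The conjugates of $P_i$ are all contained in $N_G(P_i)$.'' Hypothesis (iv) only concerns pairs of elements of \emph{coprime} order. It therefore says nothing about how a $p_i$-element outside $P_i$ acts on $P_i$, and you give no other reason for the claim. In fact the claim is equivalent to what you are proving. $P_i$ is normal in $N_G(P_i)$, hence is its unique Sylow $p_i$-subgroup. So if every conjugate of $P_i$ lay in $N_G(P_i)$, they would all equal $P_i$. As written the step is circular, and ``$G$ is generated by all its Sylow subgroups'' cannot be applied without it.

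\textbf{The fix} avoids the $p_i$-elements altogether and uses only orders.
\begin{itemize}
\item[(a)] $N_G(P_i)\ge P_i$.
\item[(b)] For each $j\neq i$, every Sylow $p_j$-subgroup $Q$ satisfies $Q\le C_G(P_i)\le N_G(P_i)$ by (iv).
\item[(c)] Hence $p_j^{a_j}$ divides $|N_G(P_i)|$ for every $j$, so $|G|$ divides $|N_G(P_i)|$ and $N_G(P_i)=G$.
\end{itemize}

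\textbf{Alternative fix.} If you prefer to use your remark about prime-power factorizations, let $M$ be the subgroup generated by all $p_i'$-elements. It is normal and centralizes $P_i$. Also $G/M$ is a $p_i$-group, since $g=g_{p_i}g_{p_i'}$ with $g_{p_i'}\in M$. Then $|G:P_iM|$ divides both $|G:P_i|$ and $|G/M|$, so $G=P_iM\le N_G(P_i)$.

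\textbf{A minor slip.} ``$\langle P_j : j\neq i\rangle$ contains all $p_j$-Sylows'' is unjustified for a fixed choice of $P_j$. It is harmless, though, since every $p_j$-element centralizes $P_i$ anyway. With the order-count repair the cycle of implications closes.
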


Now we recall the necessary graph theoretic definitions and notions from  \cite{godsil2013algebraic,b.westgraph}. A \emph{graph} $\Gamma$ is a structure $(V(\Gamma), E(\Gamma))$, where $V(\Gamma)$ is the vertex set of $\Gamma$ and $E(\Gamma)\subseteq V(\Gamma)\times V(\Gamma)$ is the edge set of $\Gamma$. If $\{u_1,u_2\}\in E(\Gamma)$, then we say that $u_1$ is \emph{adjacent} to $u_2$ and we denote it by $u_1 \sim u_2$. Otherwise, we write it as $u_1 \nsim u_2$. An edge $\{u,v\}$ in a graph $\Gamma$ is said to be a \emph{loop} if $u=v$. A graph $\Gamma$ is called a \emph{simple} graph if it has no loops or multiple edges. Throughout the paper, we consider only simple graphs. A \emph{subgraph} $\Gamma'$ of a graph $\Gamma$ is defined as a graph where $V(\Gamma')$ and $E(\Gamma')$ are subsets of  $V(\Gamma)$ and $E(\Gamma)$, respectively. A subgraph $\Gamma'$ of a graph $\Gamma$ is an \emph{induced subgraph by a set }$X\subseteq V(\Gamma)$ if $V(\Gamma')=X$ and two vertices $u$ and $v$ of $V(\Gamma')$ are adjacent in $\Gamma'$ if and only if they are adjacent in $\Gamma$. A graph $\Gamma$ is called \emph{complete} if any two vertices of $\Gamma$ are adjacent. The complete graph on $n$ vertices is denoted by $K_n$. A graph $\Gamma$ is called \emph{bipartite} if  $V(\Gamma)$ can be partitioned into two subsets such that no two vertices in the same subset are adjacent.  A \emph{complete bipartite} graph is a bipartite graph having its parts sizes $n_1$ and $n_2$ such that every vertex in one part is adjacent to each vertex of the second part and it is denoted by $K_{n_1,n_2}$. In a graph $\Gamma$, the \emph{subdivision} of an edge $\{u,v\}$ is the operation of replacing $\{u,v\}$ with a path $u\sim w \sim v$ through a new vertex $w$. A  \emph{subdivision} of $\Gamma$ is a graph obtained from $\Gamma$ by successive edge subdivisions. Let $\Gamma_1,\ldots , \Gamma_m$ be $m$ graphs such that $V(\Gamma_i)\cap V(\Gamma_j)= \varnothing$, for distinct $i, j$. Then $\Gamma =\Gamma_1 \cup \cdots \cup \Gamma_m$ is a graph with vertex set  $V(\Gamma_1) \cup \cdots \cup V(\Gamma_m)$ and edge set $E(\Gamma_1) \cup \cdots \cup E(\Gamma_m)$. We denote by $mK_n$ the union of $m$ copies of $K_n$. Let $\Gamma_1$ and $\Gamma _2$ be two graphs with disjoint vertex set, the \emph{join} $\Gamma_1 \vee \Gamma_2$ of $\Gamma_1$ and $\Gamma_2$ is the graph obtained from the union of $\Gamma_1$ and $\Gamma_2$ by adding new edges from each vertex of $\Gamma_1$ to every vertex of $\Gamma_2$.  Two graphs $\Gamma$ and $\Gamma'$ are called \emph{isomorphic graphs} if there is a bijection $f$ from $V(\Gamma)$ to $V(\Gamma')$ such that $u\sim v$ in $\Gamma$ if and only if $f(u)\sim f(v)$ in $\Gamma'$. A graph $\Gamma$ is \emph{planar} if it can be drawn on a plane surface such that no two edges cross each other. If $\gamma(\Gamma)$ (or $\overline{\gamma}(\Gamma)$) $=0$, then $\Gamma$ is planar.  A planar graph is said to be \emph{outerplanar} if it can be drawn in the plane such that all its vertices lie on the outer face. A graph with genus $1$ is called \emph{toroidal graph} and a graph with cross-cap $1$ is called \emph{projective planar graph}. The following results are useful for determining the genus and cross-cap of a graph. \begin{theorem}{\cite{b.westgraph}}{\label{planarcondition1}}
A graph $\Gamma$ is planar if and only if it does not contain a subdivision of $K_5$ or $K_{3,3}$.
 \end{theorem}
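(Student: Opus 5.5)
The statement is Kuratowski's theorem: a graph is planar if and only if it contains no subdivision of $K_5$ or $K_{3,3}$. I would prove the two directions separately. Necessity is the easy one, and most of the work is in sufficiency.

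For necessity, Euler's formula $v-e+f=2$ for connected plane graphs does the job. Every face of a simple plane graph with at least three vertices is bounded by at least three edges, so $e\le 3v-6$. For $K_5$ this gives $10\le 9$, which is false, so $K_5$ is not planar. In a bipartite graph every face has length at least four, so $e\le 2v-4$. For $K_{3,3}$ this gives $9\le 8$, again false. Finally, a subdivision of a graph is planar exactly when the graph itself is (subdividing or suppressing degree-two vertices does not affect the drawing), and subgraphs of planar graphs are planar. So no planar graph can contain a subdivision of $K_5$ or $K_{3,3}$.

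For sufficiency, I would argue by minimal counterexample. Let $\Gamma$ be a nonplanar graph containing no Kuratowski subdivision, with the fewest edges. Step 1: $\Gamma$ is $3$-connected. If $\Gamma$ is disconnected or has a cut vertex, each block is planar by minimality, and the planar blocks can be glued back together. If $\Gamma$ has a $2$-cut $\{x,y\}$, I would add the edge $xy$ to each side. Each augmented side is smaller, and it still contains no Kuratowski subdivision, because a subdivision using $xy$ could be rerouted through a path in the other side. So each side is planar, and two planar graphs sharing the edge $xy$ can be glued into a planar graph. Either way $\Gamma$ would be planar, a contradiction.

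Step 2 is the main obstacle. Take an edge $uv$ of the $3$-connected graph $\Gamma$ such that the contraction $\Gamma/uv$ is $3$-connected; Thomassen's contraction lemma guarantees such an edge exists when $|V(\Gamma)|\ge 5$. Contracting an edge cannot create a Kuratowski subdivision, since any subdivision in $\Gamma/uv$ lifts to one in $\Gamma$. So $\Gamma/uv$ is planar by minimality, and by Whitney's theorem its faces are bounded by cycles. Delete the merged vertex $w$. The neighbours of $w$ then lie on a facial cycle $C$, and the remaining task is to place $u$ and $v$ inside $C$.

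This comes down to the following case analysis. Look at how the neighbours of $u$ and the neighbours of $v$ interlace along $C$. If all neighbours of $v$ lie on a single arc of $C$ between two consecutive neighbours of $u$, then $u$ and $v$ can both be embedded inside $C$ and $\Gamma$ is planar, a contradiction. Otherwise the neighbour sets interlace, and there are two possibilities:
\begin{enumerate}
\item $u$ and $v$ share three common neighbours on $C$. Together with $u$ and $v$ these give a subdivided $K_5$.
\item There are neighbours $a,c$ of $u$ and $b,d$ of $v$ that alternate around $C$. Together with the edge $uv$ and the arcs of $C$ these give a subdivided $K_{3,3}$.
\end{enumerate}
Both contradict the hypothesis on $\Gamma$, so no counterexample exists. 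For the base case, graphs with at most four vertices are trivially planar.
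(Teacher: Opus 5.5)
The paper gives no proof of this statement (it is Kuratowski's theorem quoted from West), and your outline is the standard Thomassen-style proof found there and in Diestel: minimal counterexample, reduction to the $3$-connected case via $2$-cuts, a contractible edge, the facial cycle around the merged vertex, and the $K_5$/$K_{3,3}$ case split, so it is correct. The one step to phrase more carefully is the lifting claim, because a Kuratowski subdivision in $\Gamma/uv$ need not lift to one of the same type: if the merged vertex is a branch vertex of a subdivided $K_5$ and its four branches split two--two between $u$ and $v$, you recover a subdivided $K_{3,3}$ in $\Gamma$, which still suffices.
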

\begin{theorem}{\rm \cite[p. 58, p. 152]{b.White1984}} {\label{genuscondition}} 
The genus and cross-cap of the complete graphs $K_n$ and $K_{m, n}$ are given below:
\begin{itemize}
\item[(i)] $\gamma(K_n)= \left\lceil{\frac{(n-3)(n-4)}{12}}\right\rceil $, $n\geq 3$.
\item[(ii)]$\gamma(K_{m,n})=\left\lceil \frac{(m-2)(n-2)}{4}\right\rceil $, $m,n\geq 2$.
\item[(iii)] $\overline{\gamma}(K_n)= \left\lceil{\frac{(n-3)(n-4)}{6}}\right\rceil $, $n\geq 3$, $n\neq 7$; $\overline{\gamma} (K_7)=3$.
\item[(iv)] $\overline{\gamma}(K_{m,n})=\left\lceil \frac{(m-2)(n-2)}{2}\right\rceil $, $m,n\geq 2$.
\end{itemize}
\end{theorem}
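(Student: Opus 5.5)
The statement collects the classical values of the orientable and non-orientable genus of $K_n$ and $K_{m,n}$. I would prove each formula as a matching lower bound and upper bound. The lower bounds come from Euler's formula; the upper bounds come from explicit embeddings.

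\emph{Lower bounds.} Take a cellular embedding of a connected graph $\Gamma$ with $V$ vertices, $E$ edges and $F$ faces. On the orientable surface of genus $g$ we have $V-E+F=2-2g$; on the non-orientable surface with $k$ cross-caps we have $V-E+F=2-k$. A minimum-genus embedding may be taken cellular, by Youngs' theorem. For $K_n$ with $n\ge 3$, every face has length at least $3$, so $2E\ge 3F$. Substituting $V=n$ and $E=n(n-1)/2$ gives $g\ge \frac{(n-3)(n-4)}{12}$ and $k\ge \frac{(n-3)(n-4)}{6}$. For $K_{m,n}$ with $m,n\ge 2$ there are no odd cycles, so every face has length at least $4$ and $2E\ge 4F$. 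With $V=m+n$ and $E=mn$ this gives $g\ge \frac{(m-2)(n-2)}{4}$ and $k\ge \frac{(m-2)(n-2)}{2}$. Taking ceilings gives the lower bounds in (i)--(iv).

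\emph{Upper bounds.} Here one must exhibit embeddings attaining the bounds, and I would rely on the standard constructions.

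For (i), the Ringel--Youngs solution of the Heawood map colour problem builds triangular (or near-triangular) orientable embeddings of $K_n$ from index-one or index-two current graphs over $\mathbb{Z}_n$ or related groups. This is done separately for the twelve residue classes of $n \bmod 12$, with ad hoc adjustments where the embedding cannot be fully triangular.

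For (ii), Ringel's construction embeds $K_{m,n}$ with all faces quadrilateral whenever $(m-2)(n-2)\equiv 0 \pmod 4$. The remaining cases follow by deleting vertices from a slightly larger complete bipartite graph that does embed quadrilaterally, and checking that the ceiling is preserved.

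For (iii) and (iv), Ringel's non-orientable constructions for $K_n$ and Bouchet's (or Ringel's) for $K_{m,n}$ play the same role. Non-orientable current graphs, or orientable embeddings modified by inserting crosscaps, realise the bounds.

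\emph{The exception $K_7$.} The formula in (iii) predicts $\overline{\gamma}(K_7)=2$, that is, an embedding in the Klein bottle. Such an embedding would have to be a triangulation, since equality holds in $2E\ge 3F$. Franklin's argument rules it out: it analyses the neighbourhood of a vertex in such a triangulation and derives a contradiction. Hence $\overline{\gamma}(K_7)\ge 3$. An explicit embedding of $K_7$ with three cross-caps, obtained for example by adding a crosscap to a projective embedding of $K_7$ minus suitable edges, gives equality.

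\emph{Main obstacle.} The lower bounds and the $K_7$ case are short. The real difficulty is the upper-bound constructions for $K_n$: the residue-class case analysis via current graphs in the Ringel--Youngs theorem, and its non-orientable analogue. For these I would cite the constructions in \cite{b.White1984} rather than reproduce them.
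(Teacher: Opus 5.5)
The paper gives no proof of this statement; it quotes it from White's book. Your overall plan is the standard one: Euler-characteristic lower bounds plus the classical constructions. Your lower bounds for (i)--(iv) are correct, and so is the appeal to Franklin's theorem for $\overline{\gamma}(K_7)\ge 3$. But the two upper-bound steps you actually spell out are wrong as written.

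\textbf{The reduction in (ii) fails.} Deleting vertices from a larger quadrilaterally embedded $K_{m',n'}$ cannot reach the bound in general.
\begin{itemize}
\item Suppose $m,n\ge 7$ are both odd, and put $a=m-2$, $b=n-2$. These are odd and at least $5$.
\item Take any $K_{m',n'}\supseteq K_{m,n}$ (so $m'\ge m$, $n'\ge n$) with $4\mid (m'-2)(n'-2)$. Since $ab$ is odd, $(m',n')\neq(m,n)$, so $(m'-2)(n'-2)\ge ab+\min(a,b)\ge ab+5$.
\item Since $ab$ is odd, $4\lceil ab/4\rceil\le ab+3$. So the supergraph's genus is strictly larger than $\lceil ab/4\rceil$.
\end{itemize}
For instance, $\gamma(K_{7,7})=\lceil 25/4\rceil=7$. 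But the cheapest quadrilaterally embeddable $K_{m',n'}\supseteq K_{7,7}$ is $K_{8,8}$, of genus $9$. These cases need optimal embeddings that are not sub-embeddings of quadrilateral ones. Ringel's original proof builds them directly. Alternatively, Bouchet's diamond sum gives $\gamma(K_{m_1+m_2-2,n})\le\gamma(K_{m_1,n})+\gamma(K_{m_2,n})$, which builds them from quadrilateral pieces. For example, $\gamma(K_{7,7})\le\gamma(K_{3,7})+\gamma(K_{6,7})=2+5$.

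\textbf{The construction for $K_7$ lands on the wrong surface.} Adding one crosscap to a projective-planar embedding produces an embedding in $N_2$, the Klein bottle. That is exactly what Franklin's theorem rules out for $K_7$, so this construction cannot work. Instead, take the triangular embedding of $K_7$ in the torus and place a crosscap inside one face. Since $S_1\# N_1\cong N_3$, this gives $\overline{\gamma}(K_7)\le 3$; in general $\overline{\gamma}(G)\le 2\gamma(G)+1$.

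With these two repairs, or by simply citing (ii)--(iv) in full as the paper does, the proposal is sound.
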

\begin{theorem}{\cite{b.White1984}}{\label{m_n_g_formula}}
    Let $\Gamma$ be a simple connected graph with $n$ vertices and $m$ edges, where $n\geq 3$. Then $\gamma(\Gamma)\geq \frac{m}{6} - \frac{n}{2}+1$. Furthermore, equality holds if and only if $\Gamma$ has a triangular embedding.
\end{theorem}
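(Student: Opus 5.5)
The plan is to obtain the inequality from Euler's formula for a minimum-genus embedding, together with a count of face–edge incidences. The equality case then follows by tracking when the counting step is tight.

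\textbf{Setup.} Let $g=\gamma(\Gamma)$ and fix an embedding of $\Gamma$ on the orientable surface $S_g$ of genus $g$. I will invoke the classical fact (Youngs' theorem) that a minimum-genus embedding of a connected graph is cellular, i.e.\ every face is homeomorphic to an open disc. For a cellular embedding with $f$ faces, Euler's formula gives
\[
n-m+f=2-2g .
\]

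\textbf{Counting step.} Each face is bounded by a closed boundary walk, and every edge is traversed exactly twice in total over all these walks (either twice by one face or once by each of two faces). Hence the sum of the boundary-walk lengths equals $2m$.

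I claim each boundary walk has length at least $3$. This is the one step needing care, since it is where the hypotheses enter.
\begin{itemize}
\item A walk of length $1$ would require a loop, and a closed walk of length $2$ bounding a disc would require a pair of parallel edges. Neither exists since $\Gamma$ is simple.
\item The only other way to get a short walk is a graph consisting of a single edge, whose unique face has walk $u\,v\,u$ of length $2$. This is excluded because $\Gamma$ is connected with $n\ge 3$, so $m\ge 2$.
\item For example, if $\Gamma$ is a tree the single face has boundary length $2m\ge 4$.
\end{itemize}
Therefore $3f\le 2m$.

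\textbf{The inequality.} Substituting $f\le 2m/3$ into Euler's formula gives
\[
2-2g=n-m+f\le n-\frac{m}{3},
\]
which rearranges to $g\ge \frac{m}{6}-\frac{n}{2}+1$.

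\textbf{Equality, forward direction.} Equality holds exactly when $3f=2m$. Since every face has length at least $3$, this forces every face of the minimum-genus embedding to have length exactly $3$. So that embedding is a triangular embedding.

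\textbf{Equality, converse.} Suppose $\Gamma$ has a triangular embedding on some surface $S_{g'}$. That embedding is cellular, and its faces satisfy $3f=2m$. Euler's formula then gives $g'=\frac{m}{6}-\frac{n}{2}+1$. Since $\gamma(\Gamma)\le g'$ by minimality and $\gamma(\Gamma)\ge g'$ by the inequality just proved, we get $\gamma(\Gamma)=\frac{m}{6}-\frac{n}{2}+1$.

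\textbf{Main obstacle.} The only substantive ingredient is the cellularity of minimum-genus embeddings, which is needed for Euler's formula to hold with equality. I would simply cite Youngs' theorem for it rather than reprove it.
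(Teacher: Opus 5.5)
Your proof is correct. The paper gives no proof of this statement and only quotes it from White's book, and your route is the standard one there: Euler's formula on a minimum-genus embedding (cellular by Youngs' theorem), the face count $2m\ge 3f$, and the tightness analysis for the equality case. The only step you assert without justification is that a length-$2$ boundary walk $u\,v\,u$ forces $\Gamma\cong K_2$; this holds because the face-tracing rule then makes $uv$ the only edge in the rotations at both $u$ and $v$, so $\deg u=\deg v=1$.
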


\begin{lemma}{\rm \cite[Lemma 3.1.4]{b.Mohar2001graphs}}{\label{crosscap_formula}}
Let $\phi : \Gamma \rightarrow \mathbb{N}_k$ be a $2$-cell embedding of a connected graph $\Gamma$ to the non-orientable surface $\mathbb{N}_k$. Then $v-e+f=2-k$, where $v$, $e$ and $f$ are number of vertices, edges and faces of $\phi (\Gamma)$ respectively, and $k$ is a cross-cap of $\mathbb{N}_k$.
    
\end{lemma}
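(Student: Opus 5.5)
The plan is to identify $v-e+f$ with the Euler characteristic of $\mathbb{N}_k$ and then compute that characteristic once, from a convenient model of the surface. A $2$-cell embedding $\phi$ of a connected graph $\Gamma$ in $\mathbb{N}_k$ gives a finite CW-structure on $\mathbb{N}_k$:
\begin{itemize}
\item the $0$-cells are the $v$ vertices;
\item the $1$-cells are the $e$ edges, each an open arc attached at its ends;
\item the $2$-cells are the $f$ faces, each an open disc by the definition of a $2$-cell embedding, attached along its boundary walk.
\end{itemize}
Connectedness of $\Gamma$ is what allows every face to be an open disc. So the first step is simply to record that $\phi(\Gamma)$ together with its faces is a cell decomposition of $\mathbb{N}_k$.

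The second step is the standard fact from cellular homology. For any finite CW-complex $X$ and any field $F$,
\[
\sum_i (-1)^i (\text{number of } i\text{-cells}) = \sum_i (-1)^i \dim_F H_i(X;F).
\]
This follows by rank--nullity applied to the cellular chain complex, whose homology is the singular homology of $X$. Since the right-hand side depends only on the topological space, $v-e+f$ is the same for every $2$-cell embedding of every connected graph in $\mathbb{N}_k$. I will take $F=\mathbb{Z}_2$, because $\mathbb{N}_k$ is non-orientable and rational coefficients would require tracking torsion.

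The third step computes this common value from one specific cell structure. Use the standard polygon model of $\mathbb{N}_k$: a $2k$-gon with boundary word $a_1a_1a_2a_2\cdots a_ka_k$. After the identifications it has one vertex, $k$ edges and one face, giving $1-k+1=2-k$. Equivalently, by homology: $H_0$ and $H_2$ with $\mathbb{Z}_2$ coefficients are both $\mathbb{Z}_2$ (the surface is connected and closed, and mod $2$ no orientation is needed), while $H_1(\mathbb{N}_k;\mathbb{Z}_2)\cong\mathbb{Z}_2^{\,k}$. Either computation gives $v-e+f=2-k$.

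The main obstacle is the invariance in the second step. If one wants to avoid homology and stay purely combinatorial, I would show that $v-e+f$ is unchanged under the following moves:
\begin{itemize}
\item subdividing an edge;
\item contracting a non-loop edge;
\item deleting an edge that lies on two distinct faces, which merges them;
\item adding a chord inside a face.
\end{itemize}
These moves reduce any $2$-cell embedding to one with a single vertex and a single face, which is a polygon representation of $\mathbb{N}_k$. The delicate point is then proving that every such one-vertex, one-face representation of $\mathbb{N}_k$ has exactly $k$ edges. That requires the classification of surfaces (reduction to the normal form $a_1a_1\cdots a_ka_k$ by cut-and-paste), which is why I would prefer the homological route above.
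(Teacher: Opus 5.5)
Your argument is correct, but it does not follow the source of this lemma. The paper gives no proof; it imports the statement from Mohar--Thomassen, where Euler's formula is handled within combinatorial surface topology. There, the facial walks of a $2$-cell embedding give a polygonal representation of the surface, and the value of $v-e+f$ is fixed through the classification of surfaces, using operations that preserve $v-e+f$. No homology is involved.

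Your cellular-homology route gets the invariance of $v-e+f$ directly from the topological invariance of singular homology. You therefore need only one cell structure on $\mathbb{N}_k$: the word $a_1a_1\cdots a_ka_k$, or equivalently $H_*(\mathbb{N}_k;\mathbb{Z}_2)$. You do not need both.

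The cost is twofold. First, you import the isomorphism between cellular and singular homology. Second, you rely on a fact you pass over quickly: each open-disc face must admit a characteristic map from a closed disc onto its facial walk, so that the embedding really is a CW decomposition. This is a tameness issue, and the combinatorial treatment has to settle it too when it turns faces into polygons. In return, the combinatorial route stays inside graph theory and proves the classification and Euler's formula together.

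Three small slips, none affecting your main argument:
\begin{enumerate}
\item Faces are open discs by the definition of a $2$-cell embedding. Connectedness of $\Gamma$ follows from that; it is not what makes the faces discs.
\item Rational coefficients cause no torsion difficulty. The Betti numbers $1,\,k-1,\,0$ also give $2-k$.
\item In your fallback route, cut-and-paste reduction only shows that a normal form exists. To conclude ``exactly $k$ edges'' you need $\mathbb{N}_k\not\cong\mathbb{N}_{k'}$ for $k\neq k'$. That requires an invariant such as $H_1(\,\cdot\,;\mathbb{Z}_2)$, or an independent proof (for example by common refinement of two embeddings) that $v-e+f$ is a topological invariant. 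Without one of these, that route is not independent of the homological input.
\end{enumerate}
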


\begin{lemma}{\rm \cite{b.Mohar2001graphs}}{\label{genus formula}}
   Let $\phi : \Gamma \rightarrow \mathbb{S}_g$ be a $2$-cell embedding of a connected graph $\Gamma$ to the orientable surface $\mathbb{S}_g$. Then $v-e+f=2-2g$, where $v$, $e$ and $f$ are number of vertices, edges and faces of $\phi (\Gamma)$ respectively, and $g$ is a genus of $\mathbb{S}_g$.
\end{lemma}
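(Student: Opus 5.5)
The plan is to view the $2$-cell embedding $\phi$ as a cell decomposition of $\mathbb{S}_g$ and to show that $v-e+f$ is unchanged by a sequence of simple reductions that end at a canonical one-vertex, one-face embedding, whose count we can read off directly. Since $\phi$ is a $2$-cell embedding, every face is an open disc. So the vertices, edges and faces of $\phi(\Gamma)$ give $\mathbb{S}_g$ the structure of a $2$-dimensional CW complex, and $v-e+f$ is the alternating count of its cells. Connectedness of $\Gamma$ is what guarantees that all faces are discs at each stage of the reduction.

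\textbf{Step 1: reduce to a single vertex.} Choose a spanning tree $T$ of $\Gamma$ and contract its edges one at a time. Contracting a non-loop edge shrinks an arc to a point on the surface. This keeps the embedding cellular on the same surface, removes one vertex and one edge, and leaves the faces unchanged, so $v-e+f$ is preserved. After $v-1$ contractions we have a single vertex $u$ with $e-v+1$ loops, and still $f$ faces.

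\textbf{Step 2: reduce to a single face.} While there are at least two faces, some loop has two distinct faces on its two sides. Otherwise every edge would border a single face on both sides, and since the surface is connected there would be only one face. Deleting such a loop merges the two discs into one disc, so the embedding stays $2$-cell. This removes one edge and one face, so $v-e+f$ is again preserved. We end with one vertex, one face, and some number $m$ of loops, and the invariant has become $1-m+1=2-m$.

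\textbf{Step 3: identify $m$, the main obstacle.} The surface is now obtained from a single $2m$-gon by identifying its sides in pairs according to a word in which each loop letter appears twice. Because $\mathbb{S}_g$ is orientable, each letter appears once with each exponent. We need $m=2g$. I would try to prove this by computing $H_1$ of this CW structure. The single vertex makes every $1$-cell a cycle. The boundary of the unique $2$-cell is the word abelianised, which is $0$ because each letter occurs as $a$ and $a^{-1}$. Hence $H_1\cong\mathbb{Z}^m$. On the other hand $H_1(\mathbb{S}_g)\cong\mathbb{Z}^{2g}$, by comparison with the standard word $a_1b_1a_1^{-1}b_1^{-1}\cdots a_gb_ga_g^{-1}b_g^{-1}$. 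Topological invariance of homology therefore forces $m=2g$.

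Combining the three steps gives $v-e+f=2-2g$. If one prefers to avoid homology, one could instead use the classification theorem and its cut-and-paste normalisation of the polygon word, which preserves the numbers of vertices, edges and faces. That route is more laborious, and it is the step where I would expect most of the care to be needed.
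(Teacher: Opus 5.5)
Your proof is correct. The paper gives no proof of this lemma; it quotes it as a standard fact from Mohar--Thomassen, whose treatment of surfaces is self-contained and combinatorial (Jordan--Sch\"onflies, classification of surfaces by cutting and pasting) and uses no algebraic topology. Your Steps 1 and 2 are the usual combinatorial reductions. Contracting a spanning tree and then deleting only edges with distinct faces on their two sides both preserve $v-e+f$ and keep every face a disc. The real difference is in Step 3. To show that the one-vertex, one-face embedding has exactly $2g$ loops, you use topological invariance of $H_1$, computed cellularly, instead of cut-and-paste normalisation of the polygon word. This gives a short argument but imports singular and cellular homology. The combinatorial route is elementary but needs the laborious normalisation you mention.

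Two remarks. First, once homology is allowed, Steps 1 and 2 are not logically needed. For any finite CW structure, $v-e+f=\sum_i(-1)^i\operatorname{rank}H_i$, which for $\mathbb{S}_g$ equals $1-2g+1$. The reductions only make both boundary maps visibly zero. Second, in Step 3 you can avoid the M\"obius-band argument that each letter appears once with each exponent. There are no $3$-cells, so $H_2=\ker\partial_2\subseteq C_2\cong\mathbb{Z}$. Since $H_2(\mathbb{S}_g)\cong\mathbb{Z}$ is nonzero and $C_1$ is torsion-free, $\partial_2=0$, and hence $H_1\cong\mathbb{Z}^m$.

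A small point of attribution: connectedness of $\Gamma$ is used only to obtain the spanning tree. The faces stay discs because of the moves you chose, not because $\Gamma$ is connected.
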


\begin{remark}\label{powergraph}
If $x$ and $y$ are elements of a finite group $G$ such that neither $o(x) \mid o(y)$ nor $o(y) \mid o(x)$, then $x \nsim y$ in $\mathcal{P}(G)$. The converse holds if $x$ and $y$ lie in the same cyclic subgroup.
\end{remark}
\begin{proposition}{\rm \cite[Proposition 2.3]{bera2026difference}}\label{prime}
An element of a group $G$ whose order is prime is isolated in the difference graph.
\end{proposition}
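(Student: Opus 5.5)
We must show that an element $x$ of prime order $p$ has no neighbour in $\mathcal{G}_1(G)$ that is not already its neighbour in $\mathcal{P}(G)$. Then every edge of $\mathcal{G}_1(G)$ at $x$ is also an edge of $\mathcal{P}(G)$, so $x$ has degree $0$ in the difference $\mathcal{G}_1(G)-\mathcal{P}(G)$ and is removed as isolated. The plan is a direct comparison of neighbourhoods in the two graphs, with no case analysis beyond treating the identity separately.

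First we handle the identity. Since $e$ is adjacent to every vertex in $\mathcal{G}_1(G)$ and also in $\mathcal{P}(G)$ (because $e=x^{p}\in\langle x\rangle$), the edge $\{x,e\}$ appears in both graphs and is cancelled in the difference.

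Next let $y\neq e$ with $y\neq x$ and $x\sim y$ in $\mathcal{G}_1(G)$, so $\langle x\rangle\cap\langle y\rangle\neq\{e\}$. By Lagrange's theorem the subgroup $\langle x\rangle$ has order $p$, so its only subgroups are $\{e\}$ and $\langle x\rangle$. Since $\langle x\rangle\cap\langle y\rangle$ is a nontrivial subgroup of $\langle x\rangle$, it equals $\langle x\rangle$. Hence $x\in\langle y\rangle$, which means $x$ is a power of $y$ and $x\sim y$ in $\mathcal{P}(G)$.

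Combining the two steps, every edge incident to $x$ in $\mathcal{G}_1(G)$ is also an edge of $\mathcal{P}(G)$. Therefore $x$ is isolated in $\mathcal{G}_1(G)-\mathcal{P}(G)$, and so it does not appear in $\mathcal{D}(G)$ (or, equivalently, is isolated there), which proves the proposition. There is no real obstacle; the only point needing care is the convention for the identity vertex, which the first step settles.
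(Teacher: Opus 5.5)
Your proof is correct: the key step is that a nontrivial subgroup of the order-$p$ group $\langle x\rangle$ must be all of $\langle x\rangle$. Hence $\langle x\rangle\cap\langle y\rangle\neq\{e\}$ forces $x\in\langle y\rangle$, so $x\sim y$ already in $\mathcal{P}(G)$, and this is the standard argument for this result. The paper itself gives no proof (it quotes the proposition from the cited source), so there is no further route to compare against.
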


\begin{lemma}{\rm \cite[Lemma 3.6]{bera2026difference}}\label{generators}
Let $G$ be a group. Then for each non-identity element $a \in G$, $\mathcal{G}_a$ is an empty subgraph of $\mathcal{D}(G)$, where $\mathcal{G}_a$ is the set of all generators of the cyclic group generated by the element $a.$
\end{lemma}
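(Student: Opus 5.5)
The statement is Lemma~\ref{generators}: for a non-identity $a\in G$, no two generators of $\langle a\rangle$ are adjacent in $\mathcal{D}(G)$. The plan is to argue directly from the definitions of the two graphs. The edge set of $\mathcal{D}(G)$ is $E(\mathcal{G}_1(G))\setminus E(\mathcal{P}(G))$, restricted to non-isolated vertices. So it suffices to show that every pair of distinct elements of $\mathcal{G}_a$ is already adjacent in $\mathcal{P}(G)$. Any such edge is then removed when the difference is taken.

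\textbf{Step 1.} Take distinct $x,y\in\mathcal{G}_a$, so $\langle x\rangle=\langle y\rangle=\langle a\rangle$. Then $y\in\langle x\rangle$, so $y$ is a power of $x$, and hence $x\sim y$ in $\mathcal{P}(G)$.

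\textbf{Step 2.} For completeness, observe that $\langle x\rangle\cap\langle y\rangle=\langle a\rangle\neq\{e\}$, since $a\neq e$. Thus $x\sim y$ in $\mathcal{G}_1(G)$ as well, so the edge $\{x,y\}$ lies in both graphs and is deleted in $\mathcal{G}_1(G)-\mathcal{P}(G)$.

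\textbf{Step 3.} Removing isolated vertices deletes no edges. The subgraph of $\mathcal{D}(G)$ induced by those elements of $\mathcal{G}_a$ that survive as vertices therefore has no edges, which is the claim.

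The only subtle point, and I would not call it an obstacle, is interpretation. Some elements of $\mathcal{G}_a$ may be isolated in $\mathcal{D}(G)$ and so absent from it; this happens, for instance, when $o(a)$ is prime, by Proposition~\ref{prime}. I would state explicitly that \emph{empty subgraph} means the induced subgraph on $\mathcal{G}_a\cap V(\mathcal{D}(G))$ is edgeless, so that this case causes no difficulty.
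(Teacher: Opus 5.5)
Your proof is correct. Distinct generators $x,y$ of $\langle a\rangle$ satisfy $y\in\langle x\rangle$, so each such pair is already an edge of $\mathcal{P}(G)$ and is deleted when forming $\mathcal{G}_1(G)-\mathcal{P}(G)$. The paper gives no proof of its own (it quotes the lemma from Bera et al.), and your direct definitional argument is the natural one; reading ``empty subgraph'' as the induced subgraph on $\mathcal{G}_a\cap V(\mathcal{D}(G))$ is a sensible clarification.
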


\section{Main Results}

\begin{lemma}\label{intersection}
 Let \( G = P_1 \times P_2\) be a finite nilpotent group that contains elements of order $2$, $3$, and $6$ only. If the intersection of at most three cyclic subgroups of order $6$ is non-trivial, then \( G \) is isomorphic to either \( \mathbb{Z}_2 \times \mathbb{Z}_2 \times \mathbb{Z}_3 \) or \( \mathbb{Z}_6 \).
\end{lemma}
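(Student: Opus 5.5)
The plan is to pin down the two Sylow factors separately, by counting how many cyclic subgroups of order $6$ pass through a fixed element of order $2$ and through a fixed element of order $3$. I read the hypothesis as saying that any family of cyclic subgroups of order $6$ with non-trivial common intersection has at most three members. Equivalently, every non-identity element lies in at most three cyclic subgroups of order $6$.

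First I would record the structure forced by the orders. By Theorem~\ref{nilpotent} we have $G=P_1\times P_2$ with $P_1$ a $2$-group and $P_2$ a $3$-group. Since $\pi_G\subseteq\{1,2,3,6\}$, $P_1$ has exponent $2$ and is therefore elementary abelian, $P_1\cong\mathbb{Z}_2^{a}$. Likewise $P_2$ has exponent $3$. Because $G$ has an element of order $6$, both factors are non-trivial, so $a\ge 1$ and $|P_2|\ge 3$.

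Next I would describe the cyclic subgroups of order $6$. Every element of order $6$ has the form $xy$ with $x\in P_1$ of order $2$ and $y\in P_2$ of order $3$. Its cyclic subgroup is $\langle xy\rangle=\langle x\rangle\times\langle y\rangle$. So cyclic subgroups of order $6$ are in bijection with pairs (involution of $P_1$, subgroup of order $3$ of $P_2$). Two counts follow:
\begin{itemize}
\item a fixed element $y$ of order $3$ lies in exactly $2^a-1$ such subgroups, one for each involution of $P_1$;
\item a fixed involution $x$ lies in exactly $s_3(P_2)$ such subgroups.
\end{itemize}
The hypothesis then gives $2^a-1\le 3$, so $a\in\{1,2\}$, and $s_3(P_2)\le 3$.

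The remaining step, and the only one needing a group-theoretic fact, is to show that $s_3(P_2)\le 3$ forces $P_2\cong\mathbb{Z}_3$. I would use the standard fact that the number of subgroups of order $p$ in a finite $p$-group is $\equiv 1 \pmod p$. Alternatively, if $P_2$ is non-cyclic it contains $\mathbb{Z}_3\times\mathbb{Z}_3$ (two distinct subgroups of order $3$ generate it, since $P_2$ has exponent $3$ and a central subgroup of order $3$ can be chosen as one of them), which already has $4$ subgroups of order $3$. Either way, non-cyclic $P_2$ gives $s_3(P_2)\ge 4$, a contradiction. Hence $P_2$ is cyclic of exponent $3$, i.e.\ $P_2\cong\mathbb{Z}_3$.

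Combining the two factors, $G\cong\mathbb{Z}_2\times\mathbb{Z}_3\cong\mathbb{Z}_6$ when $a=1$, and $G\cong\mathbb{Z}_2\times\mathbb{Z}_2\times\mathbb{Z}_3$ when $a=2$. The main point of care is the interpretation of the hypothesis as a bound on the number of order-$6$ cyclic subgroups through each non-trivial element. Once that reading is fixed, the counting above is routine.
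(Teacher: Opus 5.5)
Your proposal is correct and follows the paper's proof. The paper likewise shows $P_1\cong\mathbb{Z}_2^{a}$, uses the $2^a-1$ cyclic subgroups of order $6$ through a fixed element of order $3$ to force $a\le 2$, and uses the (at least four) cyclic subgroups of order $6$ through a fixed involution to force $|P_2|=3$; you additionally justify what the paper only asserts, namely the bijection with pairs (involution, subgroup of order $3$) and the fact that a $3$-group of exponent $3$ and order greater than $3$ has at least four subgroups of order $3$.
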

\begin{proof}
 Since Sylow $p_1$-subgroup contains only elements of order $2$. Therefore, $P_1 \cong \underbrace{\mathbb{Z}_2 \times \cdots \times \mathbb{Z}_2}_{{k}\textup{-times}}$. If $k \geq3$, then the number of cyclic subgroups of order $6$ with non-trivial intersection is at least $7$, a contradiction. If the order of the Sylow $p_2$-subgroup is greater than $3$, then $G$ has at least $4$ cyclic subgroups of order $6$ with non-trivial intersection, again a contradiction. Therefore, $G \cong \mathbb{Z}_2 \times \mathbb{Z}_2 \times \mathbb{Z}_3 $ or $\mathbb{Z}_6$.  
\end{proof}
\begin{theorem} \label{planarity}
Let $G = P_1 \times \cdots \times P_k$ be a finite nilpotent group which is not a $2$-group. Then the difference graph $\mathcal{D}(G)$ is planar if and only if $G$ is isomorphic to one of the following groups: \[\mathbb{Z}_2 \times \mathbb{Z}_2 \times \mathbb{Z}_3,\text{ } \mathbb{Z}_{9} \times \mathbb{Z}_2, \text{ } \mathbb{Z}_4 \times \mathbb{Z}_{p_2},\text{ } p_2 \neq 2.\]
\end{theorem}
\begin{proof} 
Let $G = P_1 \times \cdots \times P_k$ be a finite nilpotent group such that $|P_i| = p_i^{\alpha_i}$. Let \(k \geq 3\). Consider elements \(x_i\) and \(y_j\) with \(i, j \in \{1, 2, 3\}\) such that $o(x_i)$ is \(p_1 p_3\) and  \(o(y_j)\) is \(p_2 p_3\), where $p_1 < p_2 < p_3$ and $\langle x_i \rangle \cap \langle y_j \rangle \neq \{e\}.$ Furthermore, neither $o(x_i) \mid o(y_j)$ nor $o(y_j) \mid o(x_i)$, which implies that $x_i \sim y_j$ in $\mathcal{D}(G)$. Thus, $\mathcal{D}(G)$ contains $K_{3,3}$ as a subgraph, a contradiction. Therefore, $k\leq2$ and so $|G|=p_{1}^{\alpha_1}p_{2}^{\alpha_2}$ or $|G|=p^{\alpha}.$ Assume that $k=1$ and  so $|G|=p^{\alpha}$. Let $G$ be a non-cyclic $p$-group, and $G$ cannot have all cyclic subgroups with trivial intersection, otherwise $\mathcal{D}(G)$ is an empty graph. Therefore, $G$ has at least two cyclic subgroups with non-trivial intersection. Let $H$ and $K$ be two cyclic subgroups with $o(H)= p^a$ and $o(K)=p^b$, where $a,b \geq 2$. If $p\geq 3$, then both $H$ and $K$ has at least 6 elements of order $p^a$ and $p^b$, respectively. Therefore, we get $K_{6,6}$ as a subgraph in $\mathcal{D}(G)$, a contradiction. Therefore, $p=2.$ For $k=2$, we discuss the following cases: \\
\textbf{Case-1:} \emph{$G$ has elements of order $p_1^ip_2$ and $p_1p_2^j$, where $i,\; j \geq 2$.} 
Since $G$ is nilpotent, there exists an element $z \in G$ of order $p_1^ip_2^j$. As  $\langle z \rangle$ is a cyclic subgroup, each element of order $p_1^{i}p_2$ is adjacent to all elements of order $p_1p_2^{j}$ in $\mathcal{D}(G)$. Therefore, we obtain $K_{3,3}$ as a subgraph in $\mathcal{D}(G)$, a contradiction.\\
\textbf{Case-2:} \emph{$G$ has elements of order $p_1^ip_2$ but not of order  $p_1p_2^j$, where $i,\; j \geq 2$.}\\
\textbf{Subcase-2.1:} \emph{$G$ has at least two cyclic subgroups of order $p_1^ip_2$}. Let if possible, their intersection is non-trivial. We know that there are at least three elements of order $p_1^ip_2$ in each cyclic subgroup. By Remark \ref{powergraph}, we obtain $K_{3,3}$ as a subgraph of $\mathcal{D}(G)$, which is not possible. We claim that $i=2.$ If $i\geq3,$ then consider the cyclic subgroup of order $p_1^3p_2$ which contains at least $4$ elements of order $p_1^3$ and at least $4$ elements of order $p_1^2p_2$. Consequently,  $\mathcal{D}(G)$ contains $K_{4,4}$ as a subgraph, a contradiction.
Now assume that both $p_1,\;p_2 \geq3$. Consider two elements $x$ and $y$ such that $o(x)=p_1p_2$ and  $o(y)=p_1^2$ in the cyclic subgroup $\langle z \rangle$ of order $p_1^ip_2$. Notice that $\langle x \rangle \cap \langle y \rangle \neq \{e\}$. Therefore, we get $K_{6,8}$ as a subgraph in $\mathcal{D}(G)$, a contradiction.
We may now assume that either $p_1 =2$, $p_2\geq 3$ or $p_1 \geq 3$, $p_2=2$. Now, if $p_1=2$ and $p_2\geq3$. Clearly the elements of order $2$, $p_2$ and $4p_2$ does not belong to $V(\mathcal{D}(G))$. Therefore, we get $\mathcal{D}(G)$ as disjoint union of $K_{2, \phi(p_2)}$. Notice that such $G$ does not exist. Otherwise, there exists an element $z$ of order $4$ and two elements $x$, $y$ in distinct cyclic subgroups such that $o(x)=o(y)= p_2.$ Consequently, we have two cyclic subgroups $\left<z\right> \times \left<x\right>$ and $\left<z\right> \times \left<y\right>$ having non-trivial intersection. As discussed earlier, we get $K_{3,3}$ as a subgraph of $\mathcal{D}(G)$, which is not possible. Thus, we have $p_1 \geq 3$ and  $p_2=2$. If $p_1\geq5$ and $p_2=2$, then each element of order $p_1p_2$ is adjacent to all the elements of order $p_1^2.$ As a result we get $K_{4,20}$ as a subgraph in $\mathcal{D}(G)$, a contradiction. Therefore, $p_1=3$ and $p_2=2$. Note that a cyclic subgroup of order $p_1^2p_2= 3^22$ has six elements of order $9$ and two elements of order $6$. Clearly the elements of order $2,3$ and $18$ does not belong to $V(\mathcal{D}(G))$. Therefore, we get $\mathcal{D}(G)$ as disjoint union of $K_{2,6}$. Notice that such $G$ does not exist. Otherwise, there exist two elements $x$, $y$ in distinct cyclic subgroups such that $o(x)=o(y)= 9$. Consequenty, we have two cyclic subgroups $\mathbb{Z}_2 \times \left<x\right> $ and $\mathbb{Z}_2 \times \left<y\right> $ having non-trivial intesection. As discussed earlier, we get $K_{3,3}$ as a subgraph of $\mathcal{D}(G)$, which is not possible.\\
\textbf{Subcase-2.2:} \emph{$G$ has one cyclic subgroup of order $p_1^ip_2$, where $i,\; j \geq 2$.} By the similar argument used in \textbf{Subcase-2.1}, we have $i=2$ such that either $p_1=2,\; p_2\geq 3$ or $p_1=3, \; p_2=2.$ For $p_1=2, \; p_2\geq3$, similar to the \textbf{Subcase-2.1}, we have $\mathcal{D}(G)\cong K_{2,\phi(p_2)}.$ We claim that $G\cong \mathbb{Z}_4 \times \mathbb{Z}_{p_2}$. Now for $G\cong P_1\times P_2$, note that the order of non-trivial elements of the group $P_2$ is $p_2$. Otherwise, $G$ has an element of order $p_1p_2^j,$ where $j\geq2.$ If $|P_2|\geq p_2^2$, then consider $x,y\in P_2$ such that $x \notin \left< y \right>$  and $o(x)= o(y)=p_2.$ Now, let $z\in P_1,$ such that $o(z)=4.$ Consequently, we get two cyclic subgroups $\left< z\right> \times \left< x\right>$ and $\left<z \right> \times \left< y \right>$ of order $4p_2$, which is not possible. Therefore, $P_2\cong \mathbb{Z}_{p_2}$. Since $P_1$ has an element of order $4$. We show that $|P_1| \neq 2^{\alpha},$ where $\alpha >2.$ It follows that, $P_1 \cong \mathbb{Z}_4.$ Suppose $|P_1|= 2^{\alpha},$ with $\alpha \geq 3$. If there exist $x,y\in P_1$ such that $o(x)= o(y)=4$ and $x \notin \left< y \right>.$ Then for $z\in P_2$ such that $o(z)=p_2,$ we obtain two subgroups $\left< x\right> \times \left< z\right>$ and $\left<y \right> \times \left< z \right>$ of order $4p_2$, which is not possible. Now, if number of elements of order $2$ are at least $5$, then consider $x_i \in P_1$ such that $o(x_i)=2$ and $z \in P_2$ such that $o(z)=p_2.$ Consequently, we have five cyclic subgroups $\left< x_i\right> \times P_2$, where $1\leq i \leq 5$, of $G$ whose intersection is non-trivial. Consequently, we get $K_5$ as a subgraph of $\mathcal{D}(G)$, a contradiction. If $p_1=3,\; p_2=2$, then by argument used in \textbf{Subcase-2.1}, we have $\mathcal{D}(G)\cong K_{6,2}.$ Then in the similar lines of $p_1=2,\; p_2\geq 3$, one can prove that $G\cong \mathbb{Z}_9 \times \mathbb{Z}_2$.\\
\textbf{Case-3:} \emph{$G$ does not have elements of order $p_1^ip_2$ and $p_1p_2^j$, where $i, j \geq2.$} Then, the orders of non-trivial elements in $G$ are $p_1,\; p_2$ and $p_1p_2.$ Clearly, $G$ does not have exactly one cyclic subgroup of order $p_1p_2$. Otherwise, $G\cong \mathbb{Z}_{p_1p_2}$ and so $\mathcal{D}(G)$ is an empty graph. Thus, there exist at least two cyclic subgroups of order $p_1p_2.$ Moreover, their intersection is non-trivial. Otherwise, $G$ can be seen as a union of at least two proper cyclic subgroups of order $p_1p_2$, and so $\mathcal{D}(G)$ becomes an empty graph (see Proposition \ref{prime} and Lemma \ref{generators}).\\
\textbf{Subcase-3.1:} \emph{ Either both $p_1,\; p_2 \geq 3$ or $p_1=2,\; p_2\geq5$ }. Then there exist two cyclic subgroups $H$ and $K$ of order $p_1p_2$ such that $H\cap K \neq \{e\}$. Moreover, each $H$ and $K$ contains at least $4$ elements of order $p_1p_2$. Consequently, we obtain $K_{4,4}$ as a subgraph of $\mathcal{D}(G)$, a contradiction.\\
\textbf{Subcase-3.2:} \emph{$p_1=2$ and $p_2=3.$} If the intersection of at least four cyclic subgroups of order $6$ is non-trivial, then the number of elements of order $6$ in each cyclic subgroup is two. Therefore, we obtain $K_{4,4}$ as a subgraph in $\mathcal{D}(G)$, again a contradiction. Therefore, there are at most three cyclic subgroups of order $6$ with non-trivial intersection. Consequently, by Lemma \ref{intersection}, we have, $G \cong \mathbb{Z}_2 \times \mathbb{Z}_2 \times \mathbb{Z}_3.$ This completes our proof. 
\end{proof}
\begin{figure}[h]
\centering
\includegraphics[width=0.35\textwidth]{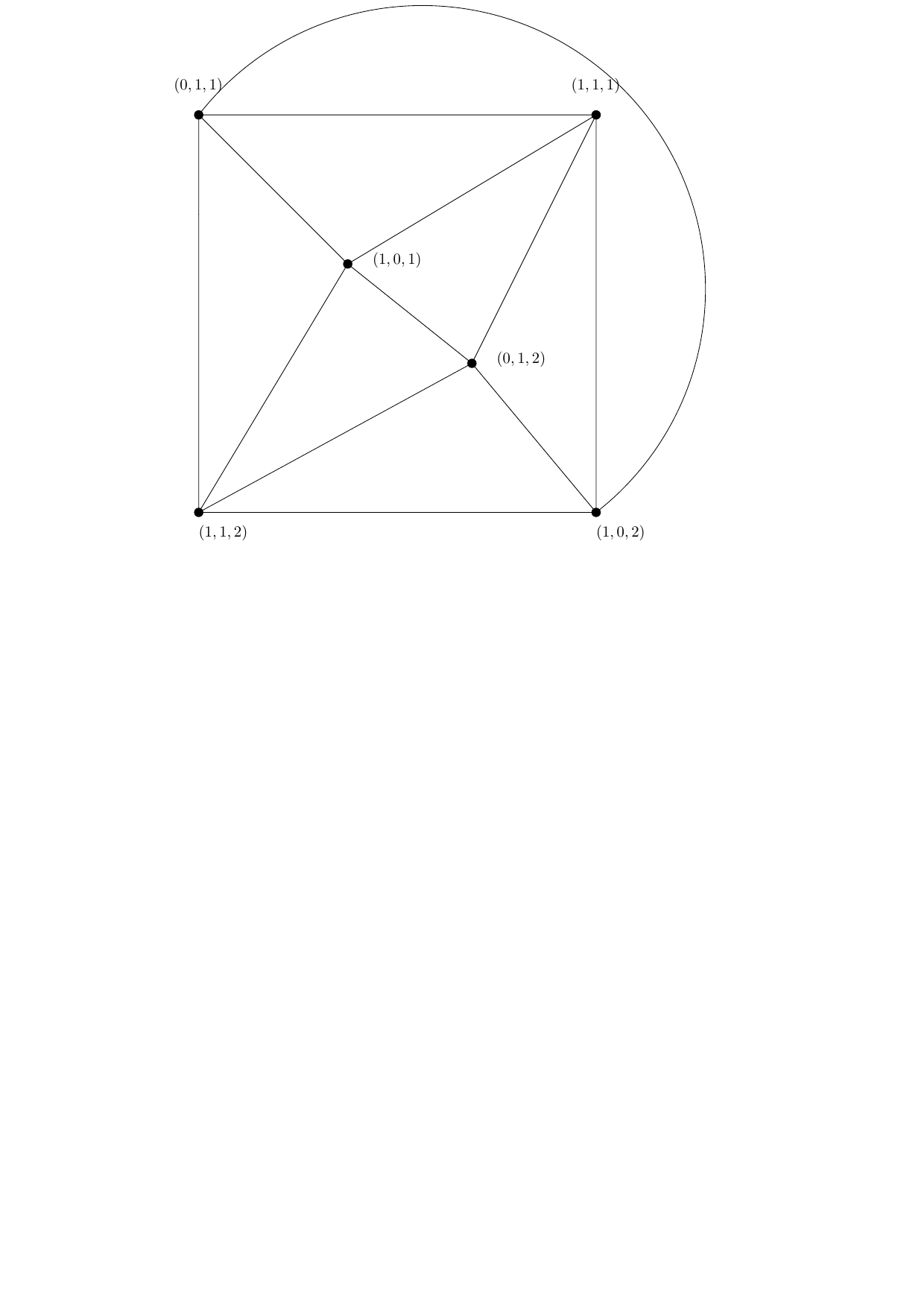}
\hfill
\includegraphics[width=0.35\textwidth]{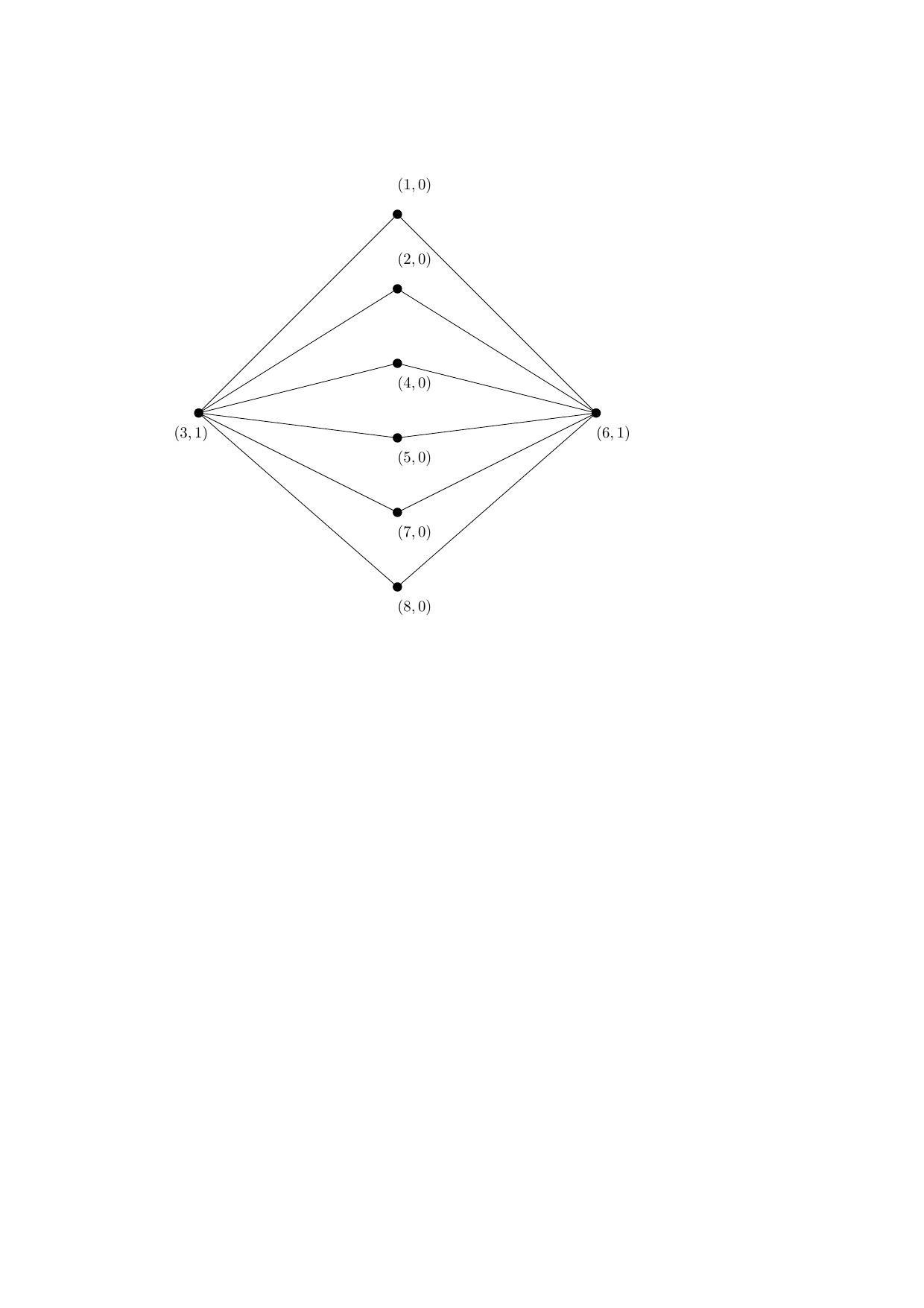}
\caption{Planar drawings of $\mathcal{D}(\mathbb{Z}_{2} \times \mathbb{Z}_{2} \times \mathbb{Z}_{3})$ and $\mathcal{D}(\mathbb{Z}_{9} \times \mathbb{Z}_{2})$}
\label{fig:planar}
\end{figure}
\begin{corollary}
 Let $G \cong P_1 \times \cdots \times P_k$ be a finite nilpotent group which is not a $2$-group. Then the difference graph $\mathcal{D}(G)$ is outerplanar if and only if $G \cong \mathbb{Z}_{4} \times \mathbb{Z}_{3}.$
\end{corollary}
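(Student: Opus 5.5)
Since an outerplanar graph is planar, Theorem \ref{planarity} lets me restrict attention to $G\cong \mathbb{Z}_2\times\mathbb{Z}_2\times\mathbb{Z}_3$, $G\cong \mathbb{Z}_9\times\mathbb{Z}_2$, or $G\cong \mathbb{Z}_4\times\mathbb{Z}_{p_2}$ with $p_2\neq 2$ an odd prime. For each candidate I will describe $\mathcal{D}(G)$ explicitly and then use the standard characterization: a graph is outerplanar if and only if it contains no subdivision of $K_4$ or $K_{2,3}$. Concretely, the plan is to exhibit a $K_{2,3}$ subgraph in every case except $\mathbb{Z}_4\times\mathbb{Z}_3$, and to show $\mathcal{D}(\mathbb{Z}_4\times\mathbb{Z}_3)$ is outerplanar directly.

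\textbf{The cyclic family $\mathbb{Z}_4\times\mathbb{Z}_{p_2}\cong\mathbb{Z}_{4p_2}$.}
\begin{itemize}
\item In a cyclic group, two elements are adjacent in $\mathcal{P}(G)$ exactly when one order divides the other (Remark \ref{powergraph}).
\item $\langle x\rangle\cap\langle y\rangle\neq\{e\}$ exactly when $\gcd(o(x),o(y))>1$.
\item The possible orders are $1,2,4,p_2,2p_2,4p_2$. The pairs with non-comparable orders and nontrivial gcd are $(4,2p_2)$ only.
\item Hence $\mathcal{D}(G)\cong K_{2,\phi(2p_2)}=K_{2,p_2-1}$, matching the description in Subcase-2.2 of Theorem \ref{planarity}.
\item For $p_2=3$ this is $K_{2,2}=C_4$, which is outerplanar.
\item For $p_2\geq 5$ it contains $K_{2,3}$, so it is not outerplanar.
\end{itemize}

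\textbf{The group $\mathbb{Z}_9\times\mathbb{Z}_2\cong\mathbb{Z}_{18}$.}
\begin{itemize}
\item The same reasoning gives non-comparable orders with nontrivial gcd only for the pair $(9,6)$.
\item So $\mathcal{D}(G)\cong K_{2,6}$, which contains $K_{2,3}$ and is not outerplanar.
\end{itemize}

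\textbf{The group $\mathbb{Z}_2\times\mathbb{Z}_2\times\mathbb{Z}_3$.}
\begin{itemize}
\item There are three cyclic subgroups of order $6$, all sharing the unique subgroup of order $3$.
\item Each contains two elements of order $6$, giving six elements of order $6$ in total.
\item Two elements of order $6$ from different cyclic subgroups intersect in the order-$3$ subgroup, but neither is a power of the other. So they are adjacent in $\mathcal{D}(G)$.
\item Elements of order $2$ and $3$ are isolated by Proposition \ref{prime}.
\item Therefore $\mathcal{D}(G)$ is the complete tripartite graph $K_{2,2,2}$ (the octahedron), consistent with the planar drawing in Figure \ref{fig:planar}.
\item Taking two vertices from one part and the remaining four vertices shows it contains $K_{2,4}\supseteq K_{2,3}$, so it is not outerplanar.
\end{itemize}

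\textbf{Main obstacle.} The only delicate point is computing each $\mathcal{D}(G)$ exactly, in particular confirming that no further edges or vertices survive. In the cyclic cases this is a finite check over the divisor lattice. I expect the cleanest route is to reuse the descriptions already derived in the proof of Theorem \ref{planarity} rather than recompute them.
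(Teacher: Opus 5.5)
Your proposal is correct and follows essentially the same route as the paper. Both use Theorem~\ref{planarity} to reduce to $\mathbb{Z}_2\times\mathbb{Z}_2\times\mathbb{Z}_3$, $\mathbb{Z}_9\times\mathbb{Z}_2$ and $\mathbb{Z}_4\times\mathbb{Z}_{p_2}$, then exhibit a $K_{2,3}$ in $\mathcal{D}(G)$ for every group except $\mathbb{Z}_4\times\mathbb{Z}_3$. Your explicit identifications $\mathcal{D}(G)\cong K_{2,2,2}$, $K_{2,6}$ and $K_{2,p_2-1}$, and your check that $\mathcal{D}(\mathbb{Z}_{12})\cong C_4$ is outerplanar, simply make explicit the converse direction that the paper leaves implicit.
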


\begin{proof} We first assume that $\mathcal{D}(G)$ is an outerplanar graph. Since every outerplanar graph is a planar graph, it follows from Theorem \ref{planarity} that, if $G\cong \mathbb{Z}_2 \times \mathbb{Z}_2 \times \mathbb{Z}_3$ or  $G\cong \mathbb{Z}_{9} \times \mathbb{Z}_2$, then from Figure~$1$, the subgraph induced by the vertices
$\{(0,1,1), (0,1,2), (1,0,1), (1,0,2),(1,1,1)\}$  and $\{(3,1), (6,1), (1,0), (2,0),(4,0)\}$ respectively, is isomorphic to $K_{2,3}$, a contradiction. 
Now, consider $G\cong\mathbb{Z}_4 \times \mathbb{Z}_{p}$ (where $p \neq 2$). For $p >3$, the set of elements of order $4$ and $2p$  form a subgraph that is isomorphic to $K_{2,3}$, again a contradiction. Therefore, $G \cong \mathbb{Z}_{4} \times \mathbb{Z}_{3}.$ 
    \end{proof}

\begin{theorem}\label{genus}
Let $G \cong P_1 \times \cdots \times P_k$ be a finite nilpotent group which is not a $2$-group. Then
\begin{itemize}
    \item[\rm(i)] $\gamma(\mathcal{D}(G))=1$ if and only if $G \cong \mathbb{Z}_2 \times \mathbb{Z}_3 \times \mathbb{Z}_3$ 
    \item[\rm(ii)] $\gamma(\mathcal{D}(G))=2$ if and only if $G \cong \mathbb{Z}_{8}\times \mathbb{Z}_3$.
\end{itemize}
Moreover, there does not exists any group $G$ such that $\overline{\gamma}(\mathcal{D}(G))=1$.
\end{theorem}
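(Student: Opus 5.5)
The plan mirrors the proof of Theorem \ref{planarity}, with the forbidden subgraphs $K_{3,3}$ and $K_5$ replaced by subgraphs of genus (resp. cross-cap) larger than allowed. The relevant thresholds from Theorem \ref{genuscondition} are
$\gamma(K_{4,5})=\gamma(K_{3,7})=2$, $\gamma(K_{4,6})=2$, $\gamma(K_{5,5})=3$, $\gamma(K_{6,6})=4$, $\gamma(K_{4,4})=1$, $\gamma(K_{3,k})=\lceil (k-2)/4\rceil$, $\gamma(K_8)=2$ and $\gamma(K_7)=1$.
For cross-caps we have $\overline{\gamma}(K_{3,5})=2$ and $\overline{\gamma}(K_{4,4})=2$. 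Throughout, the genus of $\mathcal{D}(G)$ is the sum of the genera of its connected components, since genus is additive over blocks.

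\textbf{Reduction to a short list.} First, $k\ge 3$ is excluded: the argument of Theorem \ref{planarity} gives $K_{m,n}$ between elements of order $p_1p_3$ and of order $p_2p_3$ lying in a common cyclic subgroup of order $p_1p_2p_3$. Taking all such elements gives $m=\phi(p_1p_3)\ge 4$ and $n=\phi(p_2p_3)\ge 8$, so $K_{4,8}$ appears, and its genus is $3$.

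For $k=2$ I will redo Cases 1--3. In Case 1 the elements of order $p_1^2p_2$ and $p_1p_2^2$ give $K_{\phi(p_1^2p_2),\phi(p_1p_2^2)}$, which is at least $K_{4,6}$; enlarging with further elements should push the genus to at least $3$.

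In Case 2, the bounds $i\le 3$ and small primes follow as before from large bicliques. For example, $p_1\ge 5$ gives $K_{4,20}$, and $p_1,p_2\ge 3$ gives $K_{6,8}$; both have genus $>2$.

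When $G$ contains a unique cyclic subgroup $\langle z\rangle$ of order $p_1^ip_2$, I will compute $\mathcal{D}(\langle z\rangle)$ directly.
\begin{itemize}
\item For $i=3$, $p_1=2$: in $\mathbb{Z}_{8}\times\mathbb{Z}_{p_2}$ the edges join order $8$ with orders $2p_2$ and $4p_2$, and order $4$ with order $2p_2$. For $p_2=3$ I expect the degree-weighted count to give genus exactly $2$ via Theorem \ref{m_n_g_formula} plus an explicit embedding. For $p_2\ge 5$ there is $K_{4,8}$, so the genus exceeds $2$.
\item For $i=2$, $p_1=2$ we get $K_{2,\phi(p_2)}$, which is planar.
\item For $p_1=3,p_2=2$ we are in the planar case $\mathbb{Z}_9\times\mathbb{Z}_2$; for $i=3$ the graph contains $K_{6,6}$.
\end{itemize}
Several cyclic subgroups of order $p_1^ip_2$ with nontrivial intersection yield $K_{4,4}$ or larger. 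When $G$ is non-cyclic with such subgroups, the extra subgroups either intersect nontrivially, producing too large a biclique, or contribute disjoint planar pieces. In the latter case I must check whether the disjoint planar pieces can exist at all, as in Subcase 2.1.

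In Case 3, orders are only $p_1,p_2,p_1p_2$. Cyclic subgroups of order $p_1p_2$ sharing a nontrivial element form a complete multipartite graph on their generators.
\begin{itemize}
\item For $p_1=2,p_2=3$: $G=\mathbb{Z}_2^{m}\times\mathbb{Z}_3$ gives $K_{2,2,\dots}$ with $2^m-1$ parts. For $m=3$ this is $K_{2,\ldots,2}$ with $7$ parts, which has genus $>2$; for $m=2$ it is planar.
\item $\mathbb{Z}_2\times\mathbb{Z}_3^m$ gives components: for each element of order $2$, the $\phi(6)$-generator sets over the $(3^m-1)/2$ subgroups of order $3$ form a complete multipartite graph. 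For $m=2$ this is $K_{2,2,2,2}$, which I expect is toroidal (genus $1$, non-planar since it contains $K_{3,3}$). For $m\ge 3$ it is $K_{2,\dots,2}$ with $13$ parts, of large genus.
\item Subcase 3.1 gives $K_{4,4}$ of genus $1$. That case needs care: both $p_1,p_2\ge 3$ or $p_1=2,p_2\ge5$ gives $K_{4,4,\dots}$ multipartite with at least $p_2+1\ge 4$ parts, or $p_1+1$ parts, hence genus $>2$ (e.g. $K_{4,4,4}$ has genus $\ge 2$ by the Euler bound, and one extra part pushes it over).
\end{itemize}
The $p$-group case with $p$ odd was excluded in Theorem \ref{planarity} via $K_{6,6}$, which has genus $4$, so it remains excluded.

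\textbf{Exact genus for the two survivors.} For $\mathbb{Z}_2\times\mathbb{Z}_3\times\mathbb{Z}_3$ the graph is $K_{2,2,2,2}$, which has $8$ vertices and $24$ edges. Theorem \ref{m_n_g_formula} gives $\gamma\ge 1$ (it is also non-planar), and I will exhibit a toroidal embedding. For $\mathbb{Z}_8\times\mathbb{Z}_3$ I will use the edge count in Theorem \ref{m_n_g_formula}, and the genus-$1$ exclusion via a $K_{4,5}$-type or $K_{3,7}$ subgraph, to get $\gamma\ge2$. I will then draw an explicit genus-$2$ embedding.

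\textbf{Projective planarity and the main obstacle.} For cross-cap $1$, every candidate graph on the list is either planar (cross-cap $0$) or contains $K_{4,4}$ or $K_{3,5}$ subgraphs, which have cross-cap $2$. That covers $K_{2,2,2,2}\supseteq K_{4,4}$ and the graph of $\mathbb{Z}_8\times\mathbb{Z}_3$. Hence no group has cross-cap exactly $1$.

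The main obstacle is the case analysis for non-cyclic groups in Case 2 and the exact genus-$2$ embedding for $\mathbb{Z}_8\times\mathbb{Z}_3$. Euler-type bounds are weak there, so I would first try a lower bound via an explicit $K_{3,7}$ or $K_{4,5}$ subgraph, and then construct the embedding by hand.
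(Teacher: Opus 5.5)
Your architecture is the paper's: rule out $k\ge 3$ and odd $p$-groups with $K_{4,8}$ and $K_{6,6}$, split $k=2$ by element orders, eliminate cases by bicliques, and finish with Euler bounds and explicit embeddings. Your lower bounds for the two survivors are sound and more explicit than the paper's: Theorem~\ref{m_n_g_formula} for $K_{2,2,2,2}$, and in $\mathcal{D}(\mathbb{Z}_{24})$ the $K_{4,6}\supseteq K_{4,5}$ formed by the elements of order $8$ against those of orders $6$ and $12$.

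\textbf{The gap.} The reduction from non-cyclic to cyclic groups in Case~2 is missing, and this is where the ``only if'' directions of (i) and (ii) live. When $\langle z\rangle$ is the unique cyclic subgroup of order $p_1^ip_2$, computing $\mathcal{D}(\langle z\rangle)$ does not compute $\mathcal{D}(G)$; it only gives an induced subgraph.
\begin{itemize}
\item $D_{16}\times\mathbb{Z}_3$ has a unique cyclic subgroup of order $24$. Yet its eight generators are adjacent to all sixteen elements $(s,c)$ with $s$ a reflection and $c\neq 0$, so $\mathcal{D}(G)\supseteq K_{8,16}$. Followed literally, your plan would put this group beside $\mathbb{Z}_8\times\mathbb{Z}_3$, contradicting (ii).
\item The same plan would declare $\mathcal{D}(D_8\times\mathbb{Z}_3)$ planar, although it contains $K_{4,8}$.
\item Your other branch (``several cyclic subgroups of order $p_1^ip_2$ meeting nontrivially give a too-large biclique'') fails in the decisive case $p_1=2$, $p_2=3$, $i=2$. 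The group $\mathbb{Z}_4\times\mathbb{Z}_2\times\mathbb{Z}_3$ has exactly two cyclic subgroups of order $12$, and their generators give only $K_{4,4}$, which has genus $1$.
\end{itemize}

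\textbf{The missing idea} is to use the elements of orders $4$ and $6$ lying outside a single cyclic subgroup.
\begin{enumerate}
\item $P_2\cong\mathbb{Z}_3$; otherwise four cyclic subgroups of order $12$ through one $\langle z\rangle$ give $K_{4,4,4,4}$.
\item Let $H_j=\langle z_j\rangle\times\mathbb{Z}_3$ ($j=1,2$) be distinct cyclic subgroups of order $12$. Take the four generators of $H_1$ together with $z_1^{\pm1}$ if $z_1^2=z_2^2$, or together with the two elements $(z_1^2,c)$ of order $6$ if $z_1^2\neq z_2^2$. Against the analogous six vertices for $H_2$, these span $K_{6,6}$.
\item Suppose $P_1$ is non-cyclic with a unique cyclic subgroup $\langle z\rangle$ of order $4$ and at most one of order $8$ (two of order $8$ give $K_{8,8}$). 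Counting elements of $P_1$ forces at least five involutions. Each involution $u\notin\langle z\rangle$ yields two elements $(u,c)$ adjacent to all four generators of $\langle z\rangle\times\mathbb{Z}_3$, so $K_{4,8}\subseteq\mathcal{D}(G)$.
\end{enumerate}
Analogous counts handle the other prime pairs. The paper does this, tersely, in Subcases~2.1(a), 2.1(b) and 2.2 and via the involution count of Theorem~\ref{planarity}; its list in 2.2 also omits groups such as $D_{16}\times\mathbb{Z}_3$.

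\textbf{Smaller points.}
\begin{itemize}
\item $K_{4,4,4}$ already has Euler bound $48/6-6+1=3$, not $2$. You need this value, because $\mathbb{Z}_2^2\times\mathbb{Z}_5$ gives exactly three parts, so there is no ``extra part'' to add.
\item Theorem~\ref{m_n_g_formula} gives nothing for $\mathcal{D}(\mathbb{Z}_{24})$ ($12$ vertices, $28$ edges). That graph also contains no $K_{3,7}$, since its maximum degree is $6$.
\item In Case~1, make ``should push'' precise: in $\mathbb{Z}_{36}$ the six elements of order $9$ are adjacent to the six elements of orders $6$ and $12$, giving $K_{6,6}$.
\item Case~3 must also treat $\mathbb{Z}_2^m\times P_2$ with $m\ge2$ and $|P_2|\ge 9$, and the case of non-abelian $P_2$.
\item Genus $\ge 3$ does not by itself exclude cross-cap $1$. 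So every elimination witness, not just the survivors, must be shown to be non-projective-planar; yours all contain $K_{4,4}$. This again presupposes the Case~2 reduction above.
\end{itemize}
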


\begin{proof}
 Let $G \cong P_1 \times \cdots \times P_k$ be a finite nilpotent group such that $|P_{i}|=p_{i}^{\alpha_{i}}$. Let $k \ge 3.$ Consider elements $x_{i}$ and $y_j$ with $i, j \in \{1,2,3\}$ such that $o(x_i)$ is $p_1p_3$ and $o(y_j)$ is $ p_2p_3$, where $p_1 < p_2 < p_3$  and $\langle x_i \rangle \cap \langle y_j \rangle \neq \{e\}$. Moreover, neither $o(x_i) \mid o(y_j)$ nor $o(y_j) \mid o(x_i)$, which implies that  $x_i \sim y_j$ in $\mathcal{D}(G)$. Thus, $\mathcal{D}(G)$ contains $K_{4,8}$ as a subgraph, which implies that $\gamma(\mathcal{D}(G)) \geq3$ and $\overline{\gamma}(\mathcal{D}(G)) \geq6$. Therefore, $k\leq2$ and so $|G|=p_{1}^{\alpha_1}p_{2}^{\alpha_2}$ or $|G|=p^{\alpha}.$ Assume that $k=1$ and so $|G|=p^{\alpha}$. Let $G$ be a non-cyclic $p$-group, and $G$ cannot have all cyclic subgroups with trivial intersection, otherwise $\mathcal{D}(G)$ is an empty graph. Therefore, $G$ has at least two cyclic subgroups with non-trivial intersection. Let $H$ and $K$ be two cyclic subgroups with $o(H)= p^a$ and $o(K)=p^b$, where $a,b \geq 2$. If $p\geq 3$, then both $H$ and $K$ has at least 6 elements of order $p^a$ and $p^b$, respectively. Therefore, we get $K_{6,6}$ as a subgraph in $\mathcal{D}(G)$. Consequently, $\gamma(\mathcal{D}(G)) \geq 3$ and $\overline{\gamma}(\mathcal{D}(G)) \geq 3$. Therefore, $p=2.$ For $k=2$, we discuss the following cases:$|G|=p_1^{\alpha_1}p_2^{\alpha_2}$. Now we discuss the following cases:\\
 \textbf{Case-1:} \emph{$G$ has  elements of order $p_1^ap_2$ and $p_1p_2^b$, where $a,\; b \geq 2$.}
If either $a$ or $b \geq 3$, then the cyclic subgroup $\langle z \rangle $ of order $p_1^{a}p_2^{b}$ has at least six elements $u_{i}$ of order $p_1^{a}p_2$ and eight elements $v_{i}$ of order $p_1p_2^{b}$ with non-trivial intersection. Therefore, $u_{i} \sim v_{i}$ in $\mathcal{D}(G)$. Hence $\mathcal{D}(G)$ contains  $K_{6,8}$ as a subgraph. Consequently, $\gamma(\mathcal{D}(G)) \geq 3$ and $\overline{\gamma}(\mathcal{D}(G)) \geq 3$. Hence, $a=b =2.$ Now, assume that either $p_1$ or $p_2$ is $>3$, then the cyclic subgroup of order $p_{1}^{2}p_{2}^{2}$ has at least eight elements of order $p_1^2p_2$ and twenty elements of order $p_1p_2^2$ with non-trivial intersection. Therefore, we obtain $K_{8,20}$ as a subgraph in $\mathcal{D}(G)$. Consequently, $\gamma(\mathcal{D}(G)) \geq 3$ and $\overline{\gamma}(\mathcal{D}(G)) \geq 3$. Therefore, $p_1=2$ and $p_2=3$.\\  
\textbf{Subcase-1.1} \emph{ $G$ has at least two cyclic subgroups of order $2^23$ or $2.3^2$ with non-trivial intersection.} \\
\textbf{Subcase-1.1(a):}\emph{ Intersection of three cyclic subgroups of order $2^23$ and $2.3^2$is non-trivial.} We can observe that $\mathcal{D}(G))$ contains $K_{8,6}$ as a subgraph, which implies that $\gamma(\mathcal{D}(G)) \geq 3$ and $\overline{\gamma}(\mathcal{D}(G)) \geq 3$.\\
\textbf{Subcase-1.1(b):}\emph{ Intersection of three cyclic subgroups of orders $2^2 3$ and $2. 3^2$ is trivial.} Without loss of generality, assume that the intersection of two cyclic subgroups $\langle x\rangle$ and $\langle y\rangle$ of order $2^2 3$ is non-trivial and $\langle z\rangle$ be the cyclic subgroup of order $2.3^2$ such that $(\langle x\rangle \cap \langle y\rangle)\cap \langle z\rangle = \{e\}.$ Here $\langle x\rangle \cap \langle y\rangle$ can contain elements of order three only. Otherwise, choose $x_1 \in \langle x\rangle \cap \langle y\rangle$ such that $o(x_1)=2$ and $l_1, l_2 \in \langle z\rangle$ such that $o(l_1)=2$ and $o(l_2)=9$. Consequently, we get two cyclic subgroups $\langle x_1\rangle \times \langle l_2 \rangle$ and $\langle l_1\rangle \times \langle l_2 \rangle$ of order $2.3^2$ having non-trivial intersection. Now consider an element $l$ of order $2^23^2$ in $\mathcal{D}(G).$ Therefore, we get $K_{4,12}$ as a subgraph in $\mathcal{D}(G)$, which implies that $\gamma(\mathcal{D}(G)) \geq 3$ and $\overline{\gamma}(\mathcal{D}(G)) \geq 3$.\\

\begin{figure}[h]
  \centering
 \includegraphics[width=0.4\textwidth]{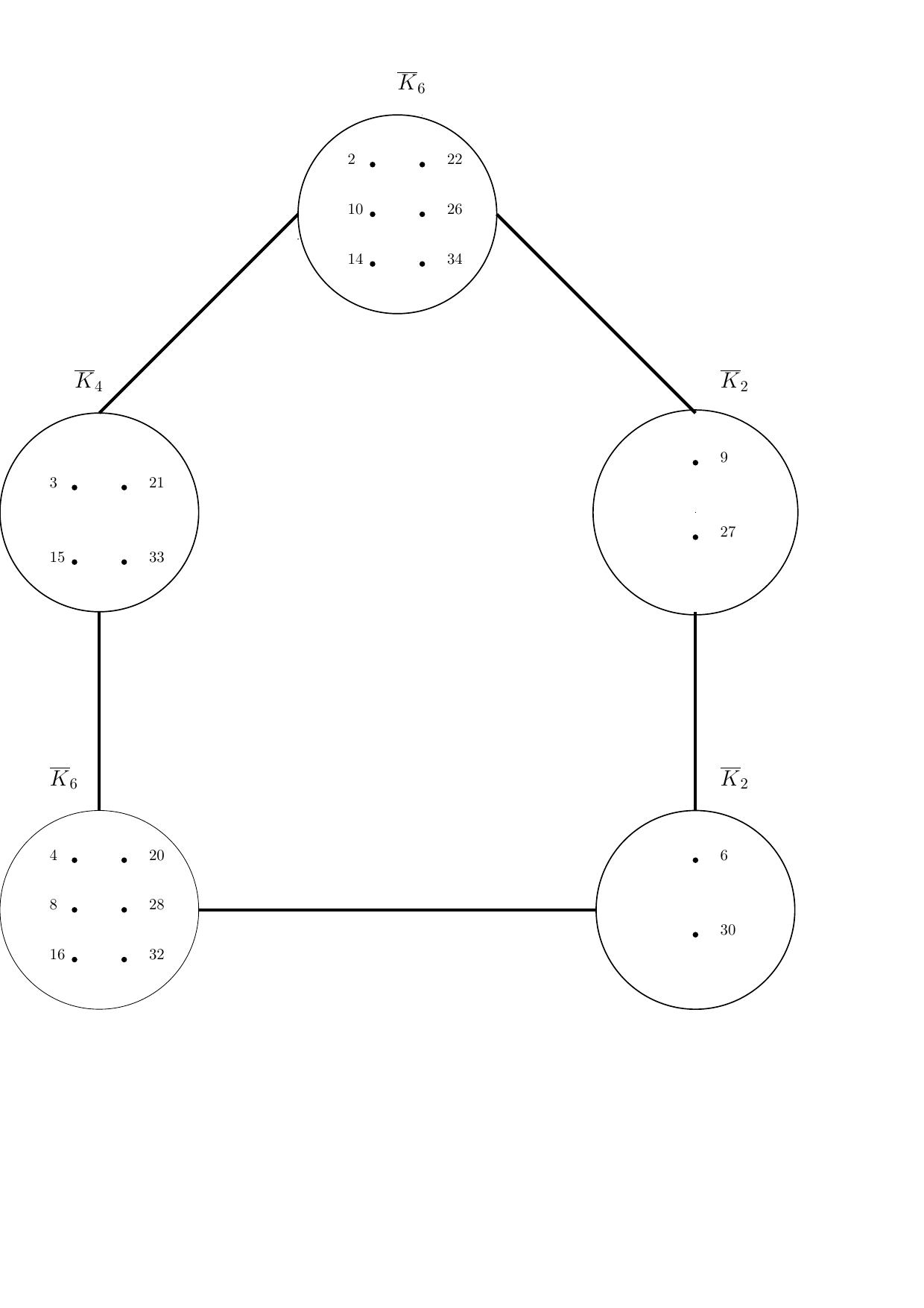}
\caption{  Drawing of $\mathcal{D}(\mathbb{Z}_{36})$}  \label{fig1}
\end{figure}
\textbf{Subcase-1.2:} \emph{$G$ has only one cyclic subgroup of order $2^23$ and $2.3^2$ or more than one cyclic subgroups with trivial intersection.}  $G$ has only one element of order $2$. Otherwise, there exist elements $x$ and $y$ such that $o(x)=o(y)=2$. Let $z\in G$ such that $o(z)=9.$ Consequently, we get two cyclic subgroups $\langle x\rangle \times \langle z \rangle$ and $\langle y\rangle \times \langle z \rangle$ of order $2.3^2$ having non-trivial intersection, which is not possible. By the similar argument used in Theorem \ref{planarity}, one can prove that $G\cong \mathbb{Z}_{36}.$
From Figure $2$, we observe that $\mathcal{D}(\mathbb{Z}_{36})$ contains $K_{4,6}$ as a subgraph. Consequently, $\gamma(\mathcal{D}(\mathbb{Z}_{36})) \geq 2$ and $\overline{\gamma}(\mathcal{D}(\mathbb{Z}_{36})) \geq 4$. We claim that $\gamma(\mathcal{D}(\mathbb{Z}_{36})) > 2$. On contrary, assume that $\gamma(\mathcal{D}(\mathbb{Z}_{36})) = 2$. Then, by Lemma \ref{genus formula}, we obtain $f = 54$. Furthermore, we must have $2e \geq 3f$, but here we have, $2e < 3f$, a contradiction. Therefore, $\gamma(\mathcal{D}(\mathbb{Z}_{36})) > 2$.\\
\textbf{Case-2:} \emph{$G$ has elements of order $p_1^ap_2$ and not of $p_1p_2^b$, where $a,\; b \geq 2$.}\\
 \textbf{Subcase-2.1:} \emph{There exist at least two cyclic subgroups of order $p_1^ap_2$ with non-trivial intersection.}
  Let if possible $a \geq 3$, then there are at least eight elements of order $p_1^ap_2$ in each cyclic subgroup. Consequently, we get $K_{8,8}$ as a subgraph in $\mathcal{D}(G)$ and hence $\gamma(\mathcal{D}(G)) \geq 3$ and $\overline{\gamma}(\mathcal{D}(G)) \geq 3$. Therefore $i=2$. Let us first assume that $p_1 \neq p_2 \neq 2, \;3$ (or $p_1=3,\; p_2=2$), then we obtain $K_{6,6}$ as a subgraph in $\mathcal{D}(G)$. Consequently, $\gamma(\mathcal{D}(G)) \geq 3$ and $\overline{\gamma}(\mathcal{D}(G)) \geq 3$.  Therefore, we get $p_1=2,\; p_2=3$.\\ 
 \textbf{Subcase-2.1(a):} \emph{$G$ has at least three cyclic subgroups of order $2^23$ with non-trivial intersection.} We get $K_{4,8}$ as a subgraph of $\mathcal{D}(G)$. Consequently, $\gamma(\mathcal{D}(G)) \geq 3$ and $\overline{\gamma}(\mathcal{D}(G)) \geq 3$.\\
\textbf{Subcase-2.1(b):} \emph{$G$ has two cyclic subgroups of order $2^23$ with non-trivial intersection.} As the intersection of two cyclic subgroups is non-trivial, it cannot simultaneously contain elements of orders $3$ and $4$. Otherwise, these elements will generate a single cyclic subgroup. In all other possible cases, we obtain $K_{6,6}$ as a subgraph of $\mathcal{D}(G)$. Therefore, $\gamma(\mathcal{D}(G)) \geq 3$ and $\overline{\gamma}(\mathcal{D}(G)) \geq 3$.\\
\textbf{Subcase-2.2:} \emph{$G$ has one cyclic subgroup of order $p_1^ap_2$ or at least two cyclic subgroups of order $p_1^ap_2$ with trivial intersection.}

 \begin{figure}[h]
  \centering
 \includegraphics[width=0.6\textwidth]{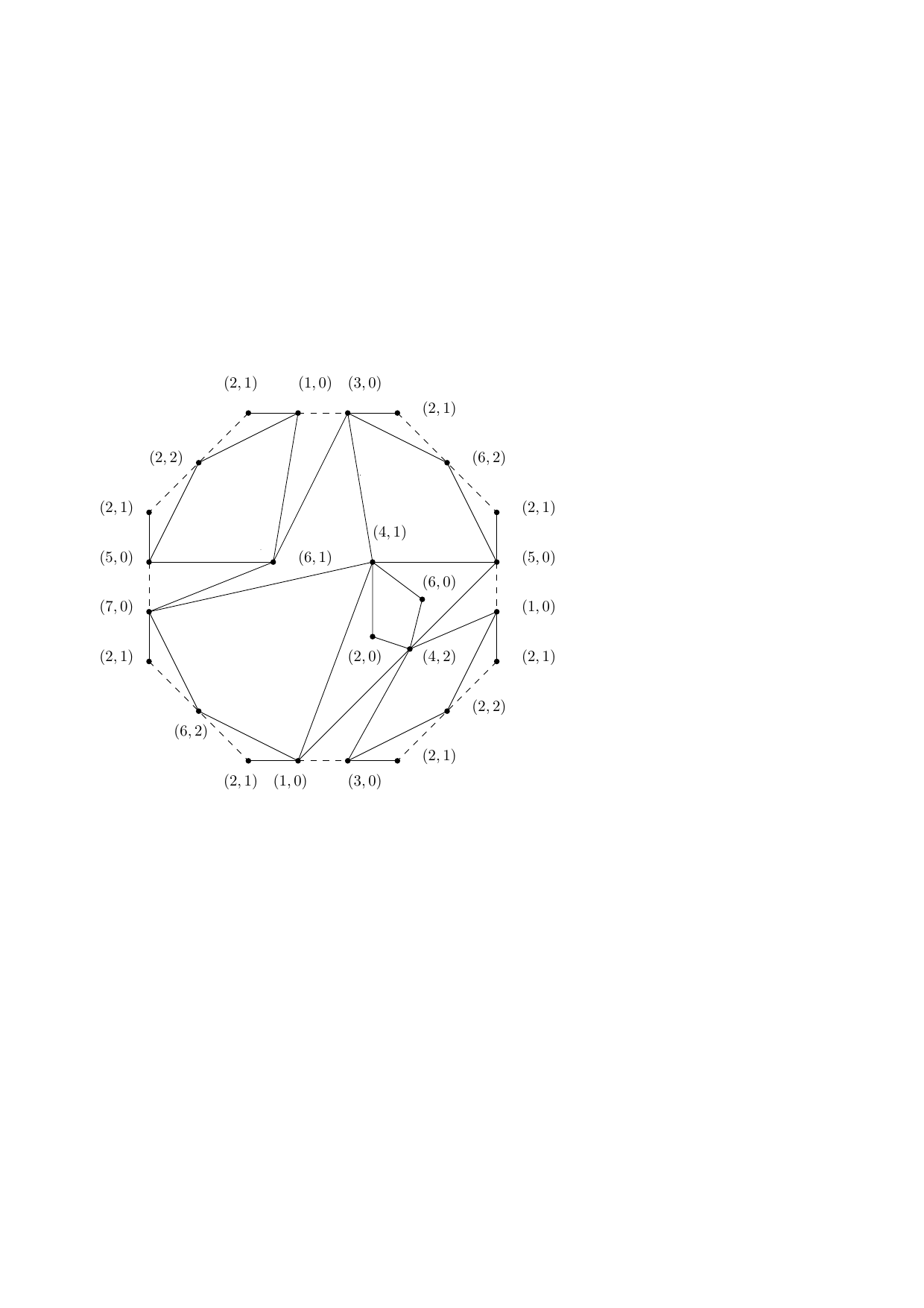}
\caption{Embedding of $\mathcal{D}(\mathbb{Z}_{8} \times \mathbb{Z}_{3} )$ in $\mathbb{S}_2$}    \label{fig2}
\end{figure}
 Let if possible $a\geq3$ and $p_1\neq 2, p_2\neq 3$, then in each of these cases, $\mathcal{D}(G)$ contains $K_{4,8}$ as a subgraph, since every element of order $p_1^3$ is adjacent to every element of order $p_1^2p_2$ in $\mathcal{D}(G)$. It follows that $\gamma(\mathcal{D}(G)) \geq 3$ and $\overline{\gamma}(\mathcal{D}(G)) \geq 3$. Therefore, $p_1=2 \; \text{and} \; p_2=3$. Now suppose that $a>3$. Then $\mathcal{D}(G)$ contains $K_{8,8}$ as a subgraph, because every element of order $2^4$ is adjacent to every element of order $2^33$. Consequently, we again obtain $\gamma(\mathcal{D}(G)) \geq 3$ and $\overline{\gamma}(\mathcal{D}(G)) \geq 3$. Thus, we must have $a = 3$ and $p_1 = 2$, $p_2 = 3$. 
 The only groups satisfying these conditions are $Q_{16}\times \mathbb{Z}_3$ and $\mathbb{Z}_{8} \times \mathbb{Z}_3$. If $G \cong Q_{16}\times \mathbb{Z}_3$, then $\mathcal{D}(Q_{16}\times \mathbb{Z}_3)$ contains $K_{6,8}$ as a subgraph, which implies that $\gamma(\mathcal{D}(G)) \geq 3$ and $\overline{\gamma}(\mathcal{D}(G))\geq 3$. Therefore $G\cong \mathbb{Z}_{8} \times \mathbb{Z}_{3}$, and the genus $2$ embedding of $\mathcal{D}(\mathbb{Z}_{8} \times \mathbb{Z}_{3}) $ is given in Figure $3$. If $a = 2$ and $p_1, p_2 \geq 3$, then $\mathcal{D}(G)$ contains $K_{6,8}$ as a subgraph and, therefore, $\gamma(\mathcal{D}(G)) \geq 3$ and $\overline{\gamma}(\mathcal{D}(G)) \geq 3$. If $p_1 \geq 5$ and $p_2 = 2$, then the elements of order $p_1^2$ are adjacent to the elements of order $p_1 p_2$, and the number of these elements is at least $20$ and $4$, respectively. Consequently, we again obtain $\gamma(\mathcal{D}(G)) \geq 3$ and $\overline{\gamma}(\mathcal{D}(G)) \geq 3$. In all remaining cases, the difference graph $\mathcal{D}(G)$ is planar.\\
\textbf{Case-3:} \emph{$G$ does not have elements of order $p_1^ap_2$, $p_1p_2^b$ with $a,\; b \geq 2$.} \\
\textbf{Subcase-3.1:} \emph{$G$ has six cyclic subgroups of order $p_1p_2$ with non-trivial intersection.} 
 Since there are at least two elements of order $p_1p_2$ in each cyclic subgroup, we get $K_{6,6}$ as a subgraph of $\mathcal{D}(G)$. Consequently, $\gamma(\mathcal{D}(G)) \geq 3$ and $\overline{\gamma}(\mathcal{D}(G)) \geq 3$. \\
 \textbf{Subcase-3.2:} \emph{$G$ has five cyclic subgroups of order $p_1p_2$ with non-trivial intersection.} 
 Since there are at least $2$ elements of order $p_1p_2$ in each cyclic subgroup, we get $K_{4,6}$ as a subgraph in $\mathcal{D}(G)$. Consequently, $\gamma(\mathcal{D}(G)) \geq 2$ and $\overline{\gamma}(\mathcal{D}(G)) \geq 4$. We claim that $\gamma(\mathcal{D}(G)) > 2$. On contrary, assume that $\gamma(\mathcal{D}(G)) = 2$. Then, by Lemma \ref{genus formula}, we have $f = 28$. Furthermore, we must have $2e \geq 3f$, yet in this situation $2e < 3f$, a contradiction. Therefore, $\gamma(\mathcal{D}(G)) > 2$.\\
\begin{figure}[h]
  \centering
\includegraphics[width=0.4\textwidth]{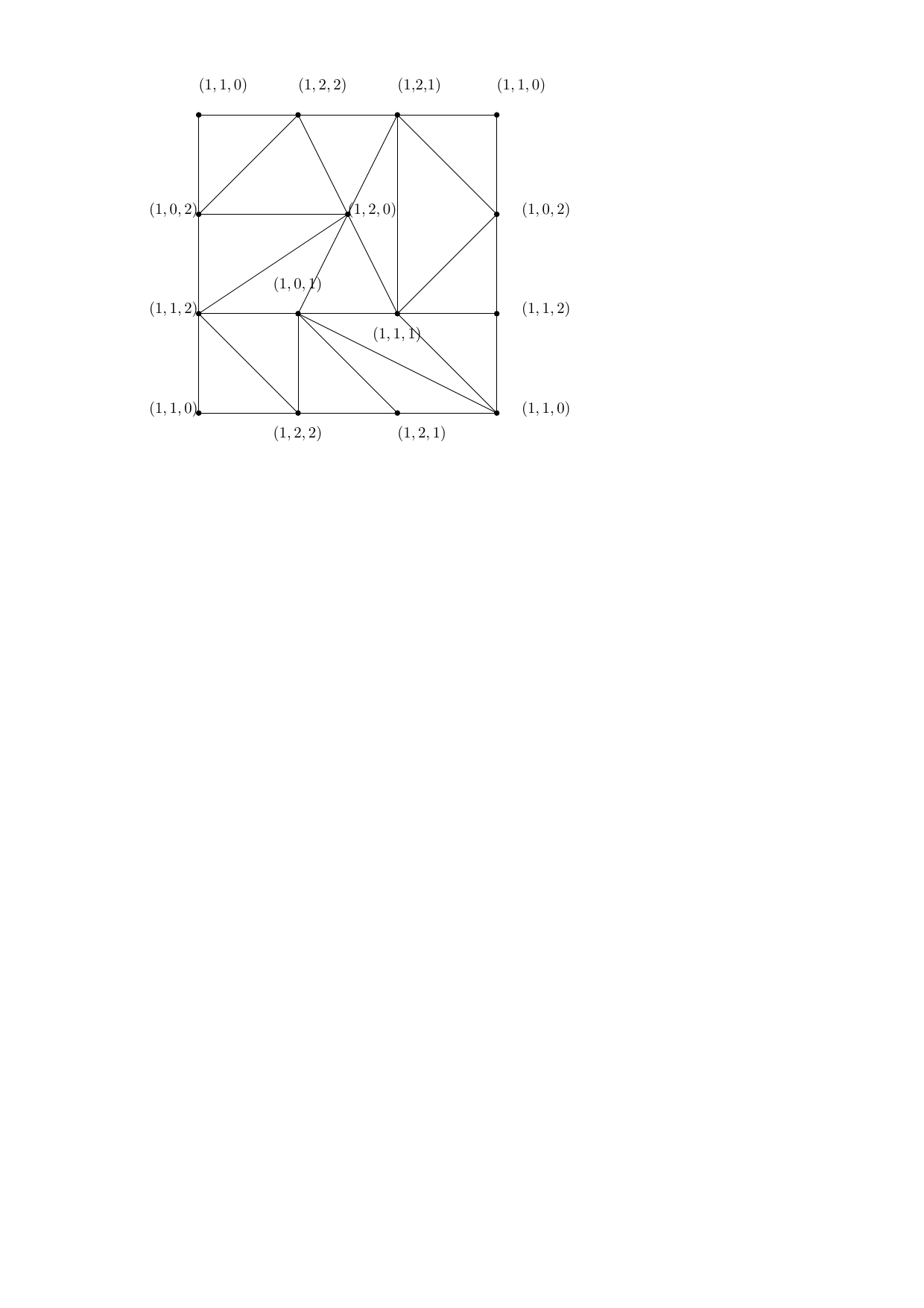}
\caption{ Embedding of $\mathcal{D}(\mathbb{Z}_2 \times \mathbb{Z}_3 \times \mathbb{Z}_3)$ in $\mathbb{S}_1$}  \label{fig3}
\end{figure}\\
\textbf{Subcase-3.3:} \emph{$G$ has four cyclic subgroups of order $p_1p_2$ with non-trivial intersection.} If $p_1, \; p_2\neq 2,\;3$, then we get $K_{8,8}$ as a subgraph in $\mathcal{D}(G)$, since there are at least $4$ elements of order $p_1p_2$ in each cyclic subgroup. Therefore, $p_1,\;p_2= 2,\; 3$. Without loss of generality, let $p_1=2$ and $p_2= 3$, then $G \cong \mathbb{Z}_2 \times \mathbb{Z}_3 \times \mathbb{Z}_3 $. From the graph of $\mathcal{D}(\mathbb{Z}_2 \times \mathbb{Z}_3 \times \mathbb{Z}_3)$, we can see that it has $K_{4,4}$ as a subgraph. Consequently, $\gamma(\mathcal{D}(\mathbb{Z}_2 \times \mathbb{Z}_3 \times \mathbb{Z}_3)) \geq 1$ and $\overline{\gamma}(\mathcal{D}(\mathbb{Z}_2 \times \mathbb{Z}_3 \times \mathbb{Z}_3)) \geq 2$. The genus $1$ embedding of $\mathcal{D}(\mathbb{Z}_2 \times \mathbb{Z}_3 \times \mathbb{Z}_3)$ is given in Figure~$4$.

\end{proof}

\section{Conclusion and Future Work}
In this paper, we investigated difference graphs from the perspective of genus and cross-cap, which measure the minimum complexity of a surface required to embed a graph without crossings. We classified finite nilpotent groups that are not $2$-groups for which the difference graph $ \mathcal{D}(G)$ is planar, and characterized the cases in which the genus of $ \mathcal{D}(G)$ is one or two. We also established that there is no finite nilpotent group, other than a $2$-group, whose difference graph has cross-cap one.\\
The results obtained in this work open up several interesting directions for future research. In particular, we propose the following conjecture:\\
\textbf{Conjecture.} Let $G$ be a finite nilpotent group which is not a $2$-group. Then $\overline{\gamma}(D(G))=2 \quad \text{if and only if} \quad G\cong \mathbb{Z}_2 \times \mathbb{Z}_3 \times \mathbb{Z}_3.$\\
Another natural problem is to investigate whether the results established here can be extended from finite nilpotent groups to the broader class of finite groups. In particular, determining the genus and cross-cap of difference graphs associated with arbitrary finite groups may lead to further structural connections between group-theoretic properties and topological graph invariants.

\section*{Declarations}

\textbf{Funding}: The first author extends their gratitude to the Birla Institute of Technology and Science (BITS) Pilani, India, for providing financial support. The second author gratefully acknowledges for providing financial support to CSIR(09/0719(17365)/2024-EMR-I), Government of India.

\textbf{Conflicts of interest/Competing interests}: There is no conflict of interest regarding the publishing of this paper.

\textbf{Availability of data and material (data transparency)}: Not applicable.

\vspace{.3cm}
\textbf{Code availability (software application or custom code)}: Not applicable. 
 

\end{document}